\pgfplotsset{compat=1.15}
\definecolor{darkbrown}{rgb}{0.57, 0.40, 0.13}
\newcommand  {\F}   {\mathfrak{F}}
\newcommand  {\Q}   {\mathcal Q}
\newcommand  {\T}   {\mathscr T}
\newcommand  {\CC}  {\mathbb C}
\newcommand  {\HH}  {\mathbb H}
\newcommand  {\NN}  {\mathbb N}
\newcommand  {\PP}  {\mathbb P}
\newcommand  {\RR}  {\mathbb R}
\renewcommand{\SS}  {\mathbb S}
\newcommand  {\VV}  {\mathbb V}
\newcommand  {\Eig}  {\operatorname{Eig}}
\newcommand  {\dev}  {\operatorname{dev}}
\newcommand  {\diag} {\operatorname{diag}}
\newcommand  {\spann}  {\operatorname{span}}
\newcommand{\ul}[1]      {\underline{#1}}
\newcommand{\wh}[1]      {\widehat{#1}}
\newcommand{\tr}         {\operatorname{tr}}
\newcommand{\abs}[1]     {\left\vert #1 \right\vert}
\newcommand{\norm}[1]    {\left\Vert #1 \right\Vert}
\newcommand{\mv}[1]   {\mathfrak{#1}}          
\newcommand{\bs}[1]{\boldsymbol{#1}}     
\newcommand{\myspace}[1]{\mathbb{#1}}
\newcommand{\m}{\scalebox{0.70}[1.0]{$-$}}
\newtheorem{theorem}{Theorem}
\newtheorem{lemma}[theorem]      {Lemma}
\theoremstyle{definition}
\newtheorem{definition} {Definition}
\newtheorem*{example*}    {Example}
\newtheorem{remark}     {Remark}
\newtheorem*{remark*}	{Remark}
\begin{document}

\renewcommand{\figurename}{\small Figure}

\title[A Semismooth Newton Solver \& its application to Elastoplasticity]{A Semismooth Newton Solver and its application to \\ an $hp$-FE Discretization in Elastoplasticity}

\author[P.~Bammer, L.~Banz, M.~Sch\"{o}nauer \& A.~Schr\"{o}der]{Patrick Bammer$^{\, 1}$, Lothar Banz$^{\, 2}$, Miriam Sch\"{o}nauer$^{\, 2}$ \& Andreas Schr\"{o}der$^{\, 2}$}

\address{$^1$Mathematisches Institut, Universit\"{a}t Bern \\
Sidlerstr.~5, CH-3012 Bern, Switzerland \vspace{0.15cm} \\
$^2$Fachbereich Mathematik, Paris Lodron Universit\"{a}t Salzburg,\\ 
Hellbrunnerstr.~34, 5020 Salzburg, Austria}

\begin{abstract}
    In this paper, we consider a class of systems of nonlinear equations, which arise in discretized mixed formulations of problems in solid mechanics by $hp$-finite elements. We introduce a semismooth Newton solver for this specific class and prove its well-definedness and local convergence. Thereby, the analysis heavily relies on a special eigenvalue interplay of two matrices involved in the considered nonlinear system. Next, we apply the general results to an $hp$-finite element discretization of a problem in elastoplasticity, which can be formulated as a system of nonlinear equations of the above type by using biorthogonal basis functions. Finally, numerical examples demonstrate the applicability and robustness of the proposed semismooth Newton solver with respect to $h$ and $p$.
\end{abstract}

\keywords{Semismooth Newton solver, superlinear convergence, $hp$-finite elements, elastoplasticity}

\thanks{%
    The authors gratefully acknowledge the support by the Bundesministerium f\"{u}r Frauen, Wissenschaft und Forschung (BMFWF) under the Sparkling Science project SPA 01-080 'MAJA -- Mathematische Algorithmen für Jedermann Analysiert'.
}

\subjclass[2020]{65H10, 65K15, 65N22, 65N30, 65N50 }

\maketitle
	

\section{Introduction}

    Elastoplasticity plays a key role in mechanical engineering, as it is used to model the behavior of construction materials \cite{ref:Poltronieri2014,ref:Wong2006}, and, in particular, elastoplasticity with kinematic hardening finds application in the modeling of metals; see,  e.g.,~\cite{ref:Josefson1995,ref:Meyer2018}. 
    The primal formulation of elastoplasticity is frequently described as a variational inequality of the second kind characterized by the appearance of a non-differentiable term $\psi$ -- the so-called \emph{plasticity functional}, see~\cite{ref:Chen1988,ref:Han2013}. The non-differentiability of $\psi$, however, causes many difficulties in the computation of a discrete solution.
    To circumvent these difficulties, one may reformulate the variational inequality as a mixed formulation in which a Lagrange multiplier is introduced to resolve the non-differentiable term $\psi$; see, e.g.,~\cite{ref:Han1995, ref:Schroeder2011}.
    In the discretization with $hp$-finite elements these constraints can be decoupled by using biorthogonal basis functions. This leads to a system of decoupled nonlinear equations which significantly simplifies the application of an efficient semismooth Newton solver; see \cite{ref:Bammer2022Icosahom}. 

    The structure of the resulting algebraic system of nonlinear equations is given by a nonlinear function $\F$, which is Lipschitz continuous and semismooth. The elements $\bs{H}$ of the Clarke subdifferential of $\F$ exhibit a certain block-matrix structure.
    Exploiting these block-matrices with regard to the requirements for a semismooth Newton method, we find that the convergence of the method heavily relies on the properties of the submatrices of $\bs{H}$; in particular, on some specific relations of the eigenvectors and eigenvalues between its submatrices, which we call \emph{eigencomplementarity}, see Definition \ref{subsec:Eigencomplement}. Moreover, it appears that these block structures as well as the property of eigencomplementarity is not restricted to problems of elastoplasticity. They also arise in other discretizations of structural mechanic problems, for instance, in frictional contact problems; see, e.g., \cite{ref:Banz2015Tresca, ref:Schroeder_PAMM2011, ref:Schroeder_MFE2011, ref:Schroeder_aPost2012, ref:Schroeder_contact2011}. Thus, the semismooth Newton method is analyzed for the whole class of systems of equations involving this block structure and the property of eigencomplementarity.
    
    In this paper, we consider a class of systems of nonlinear equations described by a nonlinear function $\mv{F}$ and analyze the convergence properties of a semismooth Newton solver applied to it. 
    To conclude the convergence of the semismooth Newton solver, we show that each element $\bs{H}$ of the Clarke subdifferential of $\F$ is regular.
    We accomplish this by exploiting the Schur complement properties and the eigencomplementarity of the submatrices of $\bs{H}$.
    With the regularity of $\bs{H}$ established, we apply the general results for semismooth Newton schemes in \cite{ref:Qi1993} to prove superlinear convergence and convergence order of $1+\alpha$ with $\alpha\in(0,1]$ depending on the smoothness of $\F$.
    Following the general convergence analysis, we consider a mixed variational formulation of a model problem of elastoplasticity with linearly kinematic hardening.
    We study how an $hp$-finite element discretization of this formulation can be restated as a system of nonlinear equations with the desired structure.
    We then prove that the submatrices of $\bs{H}$ are eigencomplementary and conclude the superlinear convergence of a semismooth Newton solver by using the general results.
    Finally, we conduct numerical experiments to showcase the robustness of the method with respect to $h$, $p$ and some projection parameter involved in the semismooth Newton method.
    
    The paper is structured as follows: In Section~\ref{sec: abstract setting} we introduce the general class of systems of nonlinear equations, to be analyzed in the following sections. Section~\ref{sec:semismooth_NS} is devoted to proving the superlinear/$1+\alpha$ order convergence of the semismooth Newton solver, which represents the main part of this paper. Then, in Section~\ref{sec:Application}, we examine the $hp$-finite element disretization of a model problem of elastoplasticity with linearly kinematic hardening and apply the general results to conclude the superlinear convergence of the semismooth Newton solver.
    Finally, the numerical examples in Section~\ref{sec:numeric} show the applicability of the semismooth Newton solver, the robustness of the number of iterations on the finite element spaces, and, most importantly, numerically validate the proven convergence rates. \vspace{0.15cm}
    
    \textbf{Notation} -- Throughout this article, we write $\ul{n}$ for the finite set $\lbrace 1,\ldots,n \rbrace$ where $n\in\NN$ is a positive integer. Furthermore, let
    \begin{align*}
        \SS_n := \Big\lbrace \bs{M}\in\RR^{n\times n} \; ;\;  \bs{M} =  \bs{M}^{\top}\Big\rbrace
    \end{align*}
    be the space of real symmetric $(n\times n)$-matrices and denote by $\bs{I}_{n\times n}$ the identity matrix of $\RR^{n\times n}$. Moreover, we write $\bs{M} \preccurlyeq 0$ and $\bs{M} \succcurlyeq 0$ to indicate that a matrix $\bs{M}\in\RR^{n\times n}$ is negative and positive semi-definite, respectively. Furthermore, for a matrix $\bs{M}$, we denote the spectrum of $\bs{M}$ by $\sigma(\bs{M})$ and write $\Eig_{\bs{M}}(\xi)$ for the eigenspace of an eigenvalue $\xi\in\sigma(\bs{M})$.
    Finally, the symbol $\bigoplus$ indicates the direct sum of vector spaces.


\smallskip

\section{Semismooth Newton Solver for a Specific Class of Nonlinear Equations}\label{sec: abstract setting}   


We start by specifying the class of systems of nonlinear equations for which the semismooth Newton solver is defined in the next subsection.


    \smallskip

    \subsection{Class of Nonlinear Equations}

    For $d\in\lbrace 2,3\rbrace$ and $L,M,N\in\NN$, let $\bs{A}\in\SS_{dM}$, $\bs{C}\in\SS_{LN}$ be positive definite matrices, and $\bs{D}\in\SS_{LN}$ a diagonal matrix with positive diagonal entries $(\bs{D})_{jj} = D_i > 0$ for $L(i-1) \leq j \leq Li$ and $i\in\ul{N}$. Furthermore, let $\bs{B}\in\RR^{dM\times LN}$ satisfy
    \begin{align*}
        \mv{a}^\top\bs{A}\,\mv{a}+ \mv{b}^\top\bs{C}\,\mv{b}+ 2\,\mv{a}^\top\bs{B}\,\mv{b} >0,
    \end{align*}
    for all $\mv{a}\in\RR^{dM}, \mv{b}\in\RR^{LN}$ with $(\mv{a},\mv{b})^{\top}\neq\mv{0}$, i.e., the block matrix 
    \begin{align}\label{eq: E Definition}
        \bs{E}:=\begin{pmatrix}
                \bs{A} & \bs{B}\\
                \bs{B}^\top & \bs{C}
            \end{pmatrix}
    \end{align}
    is positive definite.
    Then, for $\mv{l}\in\RR^{dM}$ and $K := dM+LN$ we define the affine linear mapping
    \begin{align}\label{def:lineartransform}
        \mv{L}:\RR^{dM} \times~\RR^{LN}\times~\RR^{LN}\to~\RR^K, \qquad
        \mv{L}(\mv{a},\mv{b},\mv{c}) 
         := \begin{pmatrix}
                \bs{A} & \bs{B} & \bs{0}\\
                \bs{B}^\top & \bs{C}& \bs{D}
            \end{pmatrix} \,
        \begin{pmatrix}
            \mv{a}\\
            \mv{b}\\
            \mv{c}
        \end{pmatrix} + \binom{\mv{l}}{\mv{0}}.
    \end{align}

    For $\mv{b} = (b_1,\ldots,b_{LN})^{\top}, \mv{c} = (c_1,\ldots,c_{LN})^{\top}\in\RR^{LN}$ we introduce the component vectors $\mv{b}_i$ and $\mv{c}_i$ by
    \begin{align*}
        \mv{b}_i :=(b_{L(i-1)+1},\ldots,b_{Li})^\top,
         \qquad 
        \mv{c}_i :=(c_{L(i-1)+1},\ldots,c_{Li})^\top,
         \qquad i\in \ul{N}
    \end{align*}
    and let
    \begin{align}\label{eq: Def: s_i}
        \mv{S}_i : \RR^L\times\RR^L \to \RR^L, \qquad
        \mv{S}_i(\mv{b}_i, \mv{c}_i)
         = \big( s_{i,1}(\mv{b}_i, \mv{c}_i),\ldots, s_{i,L}(\mv{b}_i, \mv{c}_i) \big)^{\top},\qquad i\in\ul{N}
    \end{align}
    be ($\alpha$-order with $\alpha\in(0,1]$) semismooth functions. Thereby, the ($\alpha$-order) semismoothness is to be understood in the sense of \cite{ref:Qi1993}.
    Therewith, we define the function
    \begin{align}\label{eq:semismooth_newtonF}
        \F : \RR^{dM}\times \RR^{LN}\times \RR^{LN} \to \RR^{K + LN}, \qquad
         \F(\mv{a},\mv{b},\mv{c})
          = \begin{pmatrix}
            \mv{L}(\mv{a},\mv{b},\mv{c}) \\
            \mv{S}_{1}(\mv{b}_1, \mv{c}_1) \\
            \vdots \\
            \mv{S}_{N}(\mv{b}_N, \mv{c}_N)
        \end{pmatrix},
    \end{align}
     which leads to the following (nonsmooth) problem: \emph{Find vectors $\mv{a}^\ast\in\RR^{dM}$, $\mv{b}^\ast\in\RR^{LN}$ and $\mv{c}^\ast\in\RR^{LN}$ such that:}
    \begin{align}\label{eq:semismooth_newtonF_02}
        \F (\mv{a}^\ast, \mv{b}^\ast, \mv{c}^\ast ) 
         = \mv{0}.
    \end{align}
    Finally, let $\partial \F$ denote the Clarke subdifferential of $\F$, i.e.,~$\partial \F(\mv{x},\mv{y},\mv{z})$ is the convex hull of all limits of the form 
    \begin{align*}
        \lim_{n\to\infty} \nabla \F\big(\mv{x}_n, \, \mv{y}_n, \, \mv{z}_n\big),
    \end{align*}
    where $(\mv{x}_n, \mv{y}_n, \mv{z}_n)^\top\in\RR^{dM}\times \RR^{LN}\times \RR^{LN}$ is a sequence of points in which $\F$ is differentiable that tends to $(\mv{x}, \mv{y}, \mv{z})^\top\in\RR^{dM}\times \RR^{LN}\times \RR^{LN}$ as $n\to\infty$. Here, $\nabla \F(\mv{x},\mv{y},\mv{z})$ represents the Jacobian matrix of $\F(\mv{x},\mv{y},\mv{z})$. The following lemma summarizes some basic properties of the nonlinear function $\F$.

        \begin{lemma}\label{lem:structure_subgradient_abstract}
            The function $\F$ is Lipschitz-continuous and ($\alpha$-order) semismooth. Furthermore, any element $\bs{H}\in\partial\F(\mv{a},\mv{b},\mv{c})$ for arbitrary $(\mv{a},\mv{b},\mv{c})^\top\in\RR^{dM}\times \RR^{LN}\times\RR^{LN}$ takes the form
                \begin{align}\label{eqn: H_structure}
                    \bs{H} = 
                    \begin{pNiceArray}{ccc}
                        \bs{A} & \bs{B} &\bs{0}   \\
                        \bs{B}^T& \bs{C} &\bs{D}  \\
                        \bs{0} & \bs{X} & \bs{Y}
                    \end{pNiceArray}
                \end{align}
                with block-diagonal matrices $\bs{X} = \diag(\bs{X}_1,\ldots,\bs{X}_N)$ and $\bs{Y} = \diag(\bs{Y}_1,\ldots,\bs{Y}_N)$, where $\bs{X}_i,\bs{Y}_i\in\RR^{L\times L}$ for $i\in\ul{N}$.

            %
        \end{lemma}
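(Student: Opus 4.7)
The plan is to split $\F$ into its affine part $\mv{L}$ and the stacked nonsmooth tail $\mv{S}=(\mv{S}_1,\ldots,\mv{S}_N)^{\top}$, treat each separately, and then reassemble.

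\emph{Step 1 (Lipschitz continuity and semismoothness).} The map $\mv{L}$ is affine, hence $C^\infty$, globally Lipschitz and semismooth of every order. Each $\mv{S}_i$ is Lipschitz and $\alpha$-order semismooth by assumption (Lipschitz continuity is built into the definition of semismoothness used in \cite{ref:Qi1993}). Both properties are preserved under stacking finitely many components -- the joint Lipschitz constant is controlled by the component constants, and the $o(\|h\|^{1+\alpha})$ remainder estimate aggregates componentwise -- so $\F$ inherits them.

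\emph{Step 2 (Jacobian at differentiable points).} At any point where each $\mv{S}_i$ is classically differentiable (a set of full measure by Rademacher's theorem) the Jacobian $\nabla\F$ exists. Its first $K$ rows coincide with the constant matrix appearing in \eqref{def:lineartransform}, producing the two top block rows of \eqref{eqn: H_structure} exactly. Because $\mv{S}_i$ depends only on the subvector $(\mv{b}_i,\mv{c}_i)$, the partial $\partial_{\mv{a}}\mv{S}_i$ vanishes, as do $\partial_{\mv{b}_j}\mv{S}_i$ and $\partial_{\mv{c}_j}\mv{S}_i$ for $j\neq i$. The last block row therefore takes the form $(\bs{0},\bs{X},\bs{Y})$ with $\bs{X}=\diag(\nabla_{\mv{b}_i}\mv{S}_i)_{i\in\ul{N}}$ and $\bs{Y}=\diag(\nabla_{\mv{c}_i}\mv{S}_i)_{i\in\ul{N}}$.

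\emph{Step 3 (Clarke subdifferential).} The set of matrices with the prescribed zero blocks and block-diagonal pattern in $(\bs{X},\bs{Y})$ is a linear -- hence closed and convex -- subspace of the ambient matrix space. Since $\partial\F$ is, by definition, the convex hull of limits of Jacobians taken along sequences of differentiable points, every $\bs{H}\in\partial\F(\mv{a},\mv{b},\mv{c})$ still lies in this subspace, which is exactly the form \eqref{eqn: H_structure}.

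I expect the argument to be essentially book-keeping once the separable dependence of $\mv{S}_i$ on $(\mv{b}_i,\mv{c}_i)$ is recorded. The only mildly delicate point will be confirming preservation of $\alpha$-order semismoothness under stacking, but this reduces immediately to summing the componentwise remainder bounds and using equivalence of norms on $\RR^{K+LN}$.
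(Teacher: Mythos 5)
Your proposal is correct and follows essentially the same route as the paper's own proof: inherit Lipschitz continuity and ($\alpha$-order) semismoothness componentwise from $\mv{L}$ and the $\mv{S}_i$, read off the top two block rows from the constant affine map, and use the separable dependence of $\mv{S}_i$ on $(\mv{b}_i,\mv{c}_i)$ to get the zero block and block-diagonal $\bs{X},\bs{Y}$. The only difference is that your Step~3 makes explicit the closure-under-limits-and-convex-hulls argument (the admissible matrices form a linear, hence closed convex, subspace), which the paper leaves implicit.
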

        \begin{proof}
            The Lipschitz-continuity and ($\alpha$-order) semismoothness of $\F$ follows directly from the Lipschitz-continuity and ($\alpha$-order) semismoothness of $\mv{L}$ and $\mv{S}_i(\mv{b}_i,\mv{c}_i)$.
            Through basic calculations with respect to $\mv{L}$ we obtain the first two block rows in \eqref{eqn: H_structure}.
            Now, we consider the partial derivatives of the components $\mv{S}_i(\mv{b}_i,\mv{c}_i)$ for $i\in\ul{N}$ in \eqref{eq:semismooth_newtonF}. First, there holds
            \begin{align*}
                \frac{\partial}{\partial a_j} s_{i,l}(\mv{b}_i,\mv{c}_i) = 0 \qquad
            \forall \, (i,l,j)\in \ul{N} \times \ul{L} \times \ul{dM}.
            \end{align*}
            Secondly, for the derivatives of $s_{i,l}(\mv{b}_i,\mv{c}_i)$ with respect to $b_j$ and $c_j$ for $j\in\ul{LN}$ we obtain
            \begin{align*}
                \frac{\partial}{\partial b_j} s_{i,l}(\mv{b}_i,\mv{c}_i) 
                = \frac{\partial}{\partial c_j} s_{i,l}(\mv{b}_i,\mv{c}_i)
                = 0
            \end{align*}
            for all  $i\in\ul{N}$ and all $j\in\ul{LN}$ with $j\notin\lbrace L(i-1)+1, \ldots, Li \rbrace$. This leads to the zero matrix and the block diagonal structure of the matrices $\bs{X}$ and $\bs{Y}$ in the last row of \eqref{eqn: H_structure}.  
        \end{proof}
       We use the following semismooth Newton solver for the system \eqref{eq:semismooth_newtonF_02} given by:
        \begin{algorithm}[H]
        \caption{Semismooth Newton Solver}\label{eq:SSN_iterates}
            \begin{algorithmic}[1]
                \Procedure{NewtonSolver} {$\mv{a}^{(0)},\mv{b}^{(0)},\mv{c}^{(0)}, tol$}
                    \State $k \gets 0$
                    \While{$|\F\big(\mv{a}^{(k)},\mv{b}^{(k)},\mv{c}^{(k)}\big)| > tol$}
                        \State Choose $\bs{X}_k, \bs{Y}_k\in\RR^{LN\times LN}$, cf. \eqref{eqn: H_structure}, such that $\bs{H}_k\in\partial\F\big( \mv{a}^{(k)}, \mv{b}^{(k)}, \mv{c}^{(k)}\big)$
                        \State Solve $\bs{H}_k \big( \Delta\mv{a}^{(k)}, \Delta\mv{b}^{(k)}, \Delta\mv{c}^{(k)} \big)^{\top}
                        = - \F\big( \mv{a}^{(k)}, \mv{b}^{(k)}, \mv{c}^{(k)} \big)$
                        \State Set $\big( \mv{a}^{(k+1)}, \mv{b}^{(k+1)}, \mv{c}^{(k+1)} \big)^{\top}
                        = \big( \mv{a}^{(k)}, \mv{b}^{(k)}, \mv{c}^{(k)} \big)^{\top} +\, \big( \Delta\mv{a}^{(k)}, \Delta\mv{b}^{(k)}, \Delta\mv{c}^{(k)} \big)^{\top}$
                        \State $k\gets k+1$
                    \EndWhile
                    \State \textbf{return} $\big(\mv{a}^{(k)},\mv{b}^{(k)},\mv{c}^{(k)}\big)$
                \EndProcedure
            \end{algorithmic}
        \end{algorithm}
        \begin{remark}
                To improve the convergence of the semismooth Newton solver one may introduce a step length parameter $t_k\in(0,1]$ in line 6 of Algorithm~\ref{eq:SSN_iterates}, which results in
                \begin{align*}
                    \big( \mv{a}^{(k+1)}, \mv{b}^{(k+1)}, \mv{c}^{(k+1)} \big)^{\top}
                                = \big( \mv{a}^{(k)}, \mv{b}^{(k)}, \mv{c}^{(k)} \big)^{\top} +t_k\, \big( \Delta\mv{a}^{(k)}, \Delta\mv{b}^{(k)}, \Delta\mv{c}^{(k)} \big)^{\top}.
                \end{align*}
                Note that the step length parameter $t_k$ has to be chosen by an adequate step length selection procedure, see, e.g., \cite{DeLuca1996, ref:Qi1998}.
                Furthermore, we note that by the linearity of $\mv{L}$ we have 
                $$
                \mv{L}\big(\mv{a}^{(k)},\mv{b}^{(k)},\mv{c}^{(k)}\big)=\mv{o}\qquad \forall\,k\geq k_0
                $$ 
                if $\mv{L}\big(\mv{a}^{(k_0)},\mv{b}^{(k_0)},\mv{c}^{(k_0)}\big)=\mv{o}$ for some $k_0\in\NN$. That condition definitely holds true if a full semismooth Newton step with $t_k=1$ has been performed.
        \end{remark}
        


    \smallskip

    \subsection{Eigencomplementarity}\label{subsec:eigencomp}

    The convergence analysis of the semismooth Newton Solver \ref{eq:SSN_iterates} fundamentally relies on a certain interplay of the eigenvalues and eigenvectors of the matrices $\bs{X}_i$ and $\bs{Y}_i$ in Lemma~\ref{lem:structure_subgradient_abstract}. Therefore, we introduce the following definition.

    \begin{definition}\label{subsec:Eigencomplement}   
        A pair $(\bs{F},\bs{G})$ of symmetric matrices $\bs{F},\bs{G}\in\SS_L$, with $L\in\NN$, is called \textit{eigencomplementary} if
        \begin{itemize}
            \item[$\diamond$] $\bs{F}\preccurlyeq 0$,
            \item[$\diamond$] $\bs{G}\succcurlyeq 0$,
            \item[$\diamond$] they have an eigenbasis $\mv{B}$ in common,
        \end{itemize} 
        and in the case that $\bs{F}$ and $\bs{G}$ are both singular we additionally assume
                \begin{align}\label{eq: Eigencomp both singular}
                    \bigoplus\limits_{\substack{ \xi \, \in \, \sigma(\bs{F}) \\ \xi \, <  \, 0 }} \Eig_{\bs{F}}(\xi) = \Eig_{\bs{G}}(0).
                \end{align}

    \end{definition}

    \begin{remark}\label{re: Eigencomp}
        \begin{enumerate}
            \item[\bf (a)] Note that Definition~\ref{subsec:Eigencomplement} is exclusively formulated for symmetric matrices. This guarantees that all eigenvalues are real.
            \item[\bf (b)] For the ease of notation, Definition~\ref{subsec:Eigencomplement} is formulated for a pair of matrices. The order, however, is not of great importance, but to formulate the following results it is advantageous to specify which matrix is negative semi-definite and which one is positive definite.
            \item[\bf (c)] Since the matrices $\bs{F},\bs{G}\in\SS_L$ of an eigencomplementary pair $(\bs{F},\bs{G})$ have a basis $\mv{B}$ of $L$ pairwise different eigenvectors $\mv{v}_1,\dots,\mv{v}_L$ in common, we have $\bs{F}\,\mv{v}_j = \xi\,\mv{v}_j$ and $\bs{G}\,\mv{v}_j = \eta\,\mv{v}_j$ for all $j\in\ul{L}$ with some eigenvalues $\xi\in\sigma(\bs{F})$ and $\eta\in\sigma(\bs{G})$ with $\xi\leq 0 \leq \eta$. Additionally, as the matrices $\bs{F}$ and $\bs{G}$ are symmetric, the common basis $\mv{B}$ of eigenvectors can be chosen to form an orthogonal basis of $\RR^L$. Note that the matrices have an eigenbasis $\mv{B}$ in common if and only if they are simultaneously diagonalizable, i.e., there exists an orthogonal matrix $\bs{S}$ such that 
           \begin{align}\label{eq: simult diag}
                \bs{F} = \bs{S} \, \bs{D}_{\bs{F}} \, \bs{S}^\top,\qquad
                \bs{G} = \bs{S} \, \bs{D}_{\bs{G}} \, \bs{S}^\top, 
           \end{align}
           where the columns of $\bs{S}$ correspond to the (orthogonal) eigenvectors of $\bs{F}$ and $\bs{G}$.
           Here, $\bs{D}_{\bs{F}}, \bs{D}_{\bs{G}}\in\RR^{L\times L}$ are diagonal matrices with diagonal entries corresponding to the eigenvalues of $\bs{F}$ and $\bs{G}$, respectively.
           \item[\bf (d)] In the case that $\bs{F}$ and $\bs{G}$ are both singular, the condition \eqref{eq: Eigencomp both singular} means that $\bs{D}_F\bs{D}_G =\bs{0}$ and, additionally, it holds that either $ (\bs{D}_F)_{jj}\neq 0$ or $(\bs{D}_G)_{jj}\neq 0$ for each $j\in\ul{L}$ with $\bs{D}_F, \bs{D}_G$ from \eqref{eq: simult diag}. This complementarity relation between the eigenvalues of $\bs{F}$ and $\bs{G}$ motivates the name \emph{eigencomplementarity}.
           \item[\bf (e)] Furthermore, condition \eqref{eq: Eigencomp both singular} implies that if $\bs{F}$ and $\bs{G}$ are both singular neither of them can be the zero matrix.
           \item[\bf (f)] We note that condition \eqref{eq: Eigencomp both singular} can equivalently be expressed as follows
           \begin{align*}
               \bigoplus\limits_{\substack{ \eta \, \in \, \sigma(\bs{G}) \\ \eta \, >  \, 0 }} \Eig_{\bs{G}}(\eta) = \Eig_{\bs{F}}(0).
           \end{align*}
  
        \end{enumerate}
        
    \end{remark}

          To elucidate the definition of eigencomplementary matrices, we give an example of two singular eigencomplementary matrices. For the singular symmetric matrices 
          \begin{align*}
                    \bs{F}=\begin{pmatrix}
                        \m 3 & 1 & 1 & \m 3\\
                        1 & \m 3 & \m 3 & 1\\
                        1 & \m 3 & \m 3 & 1\\
                        \m 3 & 1 & 1 & \m3 \\
                    \end{pmatrix}
                    ,\quad \bs{G}=\begin{pmatrix}
                        10 & 4 & \m 4 & \m 10\\
                        4 & 10 & \m 10 & \m 4\\
                        \m 4 & \m 10 & 10 & 4\\
                        \m 10 & \m 4 & 4 & 10\\
                    \end{pmatrix}
           \end{align*}
          the vectors
            \begin{align*}
                \mv{v}_1 =\begin{pmatrix}
                    \m 1\\
                    1\\
                    \m 1 \\
                    1\\
                \end{pmatrix}
                , \quad \mv{v}_2= \begin{pmatrix}
                    1\\
                    1\\
                    1\\
                    1
                \end{pmatrix}
                , \quad \mv{v}_3 = \begin{pmatrix}
                    1\\
                    \m 1\\
                    \m 1\\
                    1
                \end{pmatrix}
                , \quad \mv{v}_4 = \begin{pmatrix}
                    \m 1\\
                    \m 1\\
                    1\\
                    1
                \end{pmatrix}
            \end{align*}
            form a common eigenbasis. Moreover, we have $\sigma(\bs{F}) = \lbrace 0, \m4, \m8\rbrace$ and $\sigma(\bs{G}) = \lbrace 0, 12, 28\rbrace$. Since
            \begin{align*}
                \Eig_{\bs{F}}(\m4)\boldsymbol{\oplus}\Eig_{\bs{F}}(\m8) = \spann(\mv{v}_2)\boldsymbol{\oplus} \spann(\mv{v}_3)
                 =\Eig_{\bs{G}}(0)
            \end{align*}
            the matrices $\bs{F}$ and $\bs{G}$ are eigencomplementary.
        \begin{lemma}\label{lem: XY neg semidef}
            Let $(\bs{F}, \bs{G})$ be a pair of eigencomplementary matrices $\bs{F},\bs{G}\in\SS_L$. Then it holds that
            \begin{enumerate}
                \item[\bf 1.] If the matrix $\bs{F}$ is regular, then $\bs{F}^{\m1}\bs{G}\preccurlyeq 0$.
                \item[\bf 2.] If the matrix $\bs{G}$ is regular, then $\bs{F} \, \bs{G}^{\m1} \preccurlyeq 0$.
            \end{enumerate}
        \end{lemma}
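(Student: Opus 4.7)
The plan is to leverage the simultaneous diagonalization \eqref{eq: simult diag} noted in Remark~\ref{re: Eigencomp}(c), which reduces everything to an elementary computation on the diagonal factors and a check of symmetry. Specifically, by eigencomplementarity there exists an orthogonal matrix $\bs{S}\in\RR^{L\times L}$ such that $\bs{F}=\bs{S}\bs{D}_{\bs{F}}\bs{S}^\top$ and $\bs{G}=\bs{S}\bs{D}_{\bs{G}}\bs{S}^\top$, where $\bs{D}_{\bs{F}}$ is diagonal with entries in $(-\infty,0]$ and $\bs{D}_{\bs{G}}$ is diagonal with entries in $[0,\infty)$. Note already that $\bs{F}$ and $\bs{G}$ commute, so their products with each other's inverses (when defined) are themselves symmetric.

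For part 1, I would assume $\bs{F}$ is regular, so that all diagonal entries of $\bs{D}_{\bs{F}}$ are strictly negative and $\bs{D}_{\bs{F}}^{\m 1}$ exists with strictly negative diagonal entries. Then a short calculation gives
\begin{align*}
    \bs{F}^{\m 1}\bs{G} = \bs{S}\bs{D}_{\bs{F}}^{\m 1}\bs{S}^\top \bs{S}\bs{D}_{\bs{G}}\bs{S}^\top = \bs{S}\,\bs{D}_{\bs{F}}^{\m 1}\bs{D}_{\bs{G}}\,\bs{S}^\top,
\end{align*}
and since the product of two diagonal matrices is diagonal, the matrix $\bs{D}_{\bs{F}}^{\m 1}\bs{D}_{\bs{G}}$ is diagonal with entries of the form (negative)$\cdot$(nonnegative)$\leq 0$. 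Hence $\bs{F}^{\m 1}\bs{G}$ is orthogonally similar to a diagonal matrix with nonpositive entries, which yields both symmetry and $\bs{F}^{\m 1}\bs{G}\preccurlyeq 0$.

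Part 2 is completely analogous: if $\bs{G}$ is regular, then $\bs{D}_{\bs{G}}$ has strictly positive diagonal entries, and
\begin{align*}
    \bs{F}\bs{G}^{\m 1} = \bs{S}\,\bs{D}_{\bs{F}}\bs{D}_{\bs{G}}^{\m 1}\,\bs{S}^\top,
\end{align*}
where the diagonal matrix $\bs{D}_{\bs{F}}\bs{D}_{\bs{G}}^{\m 1}$ has entries (nonpositive)$\cdot$(positive)$\leq 0$; again $\bs{F}\bs{G}^{\m 1}\preccurlyeq 0$ follows.

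There is essentially no obstacle; the eigencomplementarity condition \eqref{eq: Eigencomp both singular} is not even needed here, because it only constrains the case where \emph{both} matrices are singular, whereas in each case of the lemma one of them is regular. The only subtle point worth flagging in the write-up is to observe explicitly that $\bs{F}^{\m 1}\bs{G}$ and $\bs{F}\bs{G}^{\m 1}$ are symmetric despite being products of symmetric matrices, which holds precisely because simultaneous diagonalizability forces $\bs{F}$ and $\bs{G}$ to commute.
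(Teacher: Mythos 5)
Your proposal is correct and follows essentially the same route as the paper's proof: simultaneous diagonalization via \eqref{eq: simult diag}, reduction to the diagonal product $\bs{D}_{\bs{F}}^{\m 1}\bs{D}_{\bs{G}}$ (resp.\ $\bs{D}_{\bs{F}}\bs{D}_{\bs{G}}^{\m 1}$), and a sign check on eigenvalue quotients. The only additions are your explicit remarks on symmetry via commutativity and on condition \eqref{eq: Eigencomp both singular} being unused here, both of which are accurate but implicit in the paper's argument.
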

        \begin{proof}
            By the eigencomplementarity of $(\bs{F}, \bs{G})$ there exist an orthogonal matrix $\bs{S}$ and diagonal matrices $\bs{D}_F,\bs{D}_G\in\RR^{L\times L}$ such as in \eqref{eq: simult diag}.
            If $\bs{F}$ is regular it holds that
           \begin{align*}
                \bs{F}^{\m1} = \bs{S} \, \bs{D}_{\bs{F}}^{\m1} \, \bs{S}^\top,
           \end{align*}
           which implies that
           \begin{align*}
                \bs{F}^{\m1}\bs{G} = \bs{S} \, \bs{D}_{\bs{F}}^{\m1} \, \bs{D}_{\bs{G}} \, \bs{S}^\top.
           \end{align*}
           Thus, the entries of the diagonal matrix $\bs{D}_{\bs{F}}^{\m1} \, \bs{D}_{\bs{G}}$ correspond to the eigenvalues of $\bs{F}^{\m1}\bs{G}$ and are of the form $\xi^{\m1}\, \eta$, where $\xi\in\sigma(\bs{F})$ and $\eta\in\sigma(\bs{G})$. As in this case $\bs{F}$ is negative definite and $\bs{G}$ is positive semi-definite, we conclude that $\xi^{\m1} \, \eta\leq 0$, which proves the assertion. The case for regular $\bs{G}$ follows the same arguments.
        \end{proof}
        \begin{lemma}\label{lem: Singular Eigencomp Orthogonal}
            Let $(\bs{F}, \bs{G})$ be a pair of singular, eigencomplementary matrices $\bs{F},\bs{G}\in\SS_L$. If $\bs{G}\,\mv{u} = \bs{F}\,\mv{w}$ for two vectors $\mv{u},\mv{w}\in\RR^L$, then it follows that $\mv{u}^\top\mv{w}=0$.
        \end{lemma}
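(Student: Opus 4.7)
The plan is to diagonalize $\bs{F}$ and $\bs{G}$ simultaneously using their common orthogonal eigenbasis and then exploit the complementarity of their spectra. By Remark~\ref{re: Eigencomp}(c) there exists an orthogonal matrix $\bs{S}$ whose columns $\mv{v}_1,\dots,\mv{v}_L$ form a common eigenbasis, with $\bs{F}\mv{v}_j=\xi_j\mv{v}_j$ and $\bs{G}\mv{v}_j=\eta_j\mv{v}_j$, where $\xi_j\le 0\le \eta_j$. Since $(\bs{F},\bs{G})$ are both singular and eigencomplementary, condition~\eqref{eq: Eigencomp both singular} (as reformulated in Remark~\ref{re: Eigencomp}(d)) tells us that for every index $j$, exactly one of $\xi_j$ and $\eta_j$ is nonzero; that is, the index set $\ul{L}$ splits into $I_F:=\{j : \xi_j<0,\ \eta_j=0\}$ and $I_G:=\{j : \xi_j=0,\ \eta_j>0\}$.

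Next, I would expand the two vectors in this basis as $\mv{u}=\sum_{j=1}^L u_j\mv{v}_j$ and $\mv{w}=\sum_{j=1}^L w_j\mv{v}_j$. Applying $\bs{G}$ and $\bs{F}$ respectively gives
\begin{align*}
    \bs{G}\mv{u}=\sum_{j=1}^L \eta_j u_j\mv{v}_j,\qquad \bs{F}\mv{w}=\sum_{j=1}^L \xi_j w_j\mv{v}_j,
\end{align*}
so the hypothesis $\bs{G}\mv{u}=\bs{F}\mv{w}$ together with the linear independence of the $\mv{v}_j$ yields the componentwise relations $\eta_j u_j=\xi_j w_j$ for each $j\in\ul{L}$.

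For $j\in I_F$ this reduces to $0=\xi_j w_j$ with $\xi_j<0$, forcing $w_j=0$; for $j\in I_G$ it reduces to $\eta_j u_j=0$ with $\eta_j>0$, forcing $u_j=0$. In either case, $u_j w_j=0$ for every $j\in\ul{L}$. Finally, since the eigenbasis is orthogonal, $\mv{v}_j^\top\mv{v}_k=0$ for $j\neq k$, so
\begin{align*}
    \mv{u}^\top\mv{w}=\sum_{j=1}^L u_j w_j\,\|\mv{v}_j\|^2=0,
\end{align*}
which is the desired conclusion. The argument is essentially bookkeeping; the only subtle point is ensuring one uses precisely the complementary structure given by~\eqref{eq: Eigencomp both singular} to eliminate each coefficient pair $(u_j,w_j)$, so I would make the partition $\ul{L}=I_F\cup I_G$ explicit before reading off the vanishing coefficients.
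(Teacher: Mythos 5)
Your proof is correct and follows essentially the same route as the paper's: simultaneous diagonalization via the common orthogonal eigenbasis, reading off the componentwise relations $\eta_j u_j = \xi_j w_j$ from the hypothesis, and using the eigenvalue complementarity from \eqref{eq: Eigencomp both singular} to force $u_j w_j = 0$ for every $j$. The only cosmetic difference is that the paper writes the argument in terms of coordinate vectors $\mv{\gamma},\mv{\beta}$ with $\bs{S}$ orthonormal, so the final step is simply $\mv{u}^\top\mv{w}=\mv{\gamma}^\top\mv{\beta}=0$.
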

        \begin{proof}
            By the eigencomplementarity of $(\bs{F}, \bs{G})$ there exist an orthogonal matrix $\bs{S}$ and diagonal matrices $\bs{D}_F,\bs{D}_G\in\RR^{L\times L}$ such as in \eqref{eq: simult diag}, cf. Remark~\ref{re: Eigencomp}\textbf{(c)}. We write $\mv{u},\mv{w}\in\RR^L$
            \begin{align*}
                \mv{u}
                = \sum_{j\in\ul{L}} \gamma_{j} \, \mv{v}_j = \bs{S}\,\mv{\gamma}, \quad
                \mv{w} 
                = \sum_{j\in\ul{L}} \beta_{j} \, \mv{v}_j = \bs{S}\,\mv{\beta},
            \end{align*}
            where $\mv{\gamma}=(\gamma_1,\dots,\gamma_L)^\top,\,\mv{\beta}=(\beta_1,\dots,\beta_L)^\top\in\RR^L$ are the coordinate vectors of $\mv{u}$ and $\mv{w}$, respectively. Thus, it holds that
            \begin{align*}
                \bs{G}\,\mv{u}&=(\bs{S}\bs{D}_G\bs{S}^\top)\bs{S}\,\mv{\gamma} = \bs{S}\,\bs{D}_G\,\mv{\gamma} \quad \text{ and }\quad \bs{F}\,\mv{w} = (\bs{S}\bs{D}_F\bs{S}^\top)\bs{S}\,\mv{\beta} =\bs{S}\,\bs{D}_F\,\mv{\beta},
            \end{align*}
            which implies that
            \begin{align}\label{eq: DG g = DF b}
               \bs{D}_G\,\mv{\gamma}=\bs{D}_F\,\mv{\beta}
            \end{align}
            since $\bs{G}\,\mv{u} = \bs{F}\,\mv{w}$.
             As $\bs{G}$ is singular there exists an index set $J\subset\ul{L}$ with $J\neq\emptyset$ such that $(\bs{D}_G)_{jj} = 0$ for all $j\in J$ and $(\bs{D}_G)_{jj}\neq 0$ for $j\in\ul{L}\setminus J$. By \eqref{eq: Eigencomp both singular} and Remark~\ref{re: Eigencomp}{\bf (d)} it follows that $(\bs{D}_F)_{jj}\neq 0$ for $j\in J$ and $(\bs{D}_F)_{jj} = 0$ for $j\in \ul{L}\setminus J$, which, in view of \eqref{eq: DG g = DF b}, implies that
            \begin{align*}
                \gamma_{j} = 0 \quad\forall\, j\in J\qquad\text{and} \qquad
                \beta_{j} = 0 \quad\forall\, j\in \ul{L}\setminus J.
            \end{align*}
            Consequently, from the orthogonality of $\bs{S}$ it follows that
            \begin{align*}
                \mv{u}^\top\mv{w}=\mv{\gamma}^\top\bs{S}^\top\bs{S}\,\mv{\beta} = \mv{\gamma}^\top\mv{\beta}= \sum\limits_{j=1}^L\gamma_j\,\beta_j=0,
            \end{align*}
            which completes the argument.
        \end{proof}
        


    \smallskip

    \section{Well-Definedness \& Convergence of the Semismooth Newton Solver}\label{sec:semismooth_NS}

        In this section, we prove the well-definedness and local convergence of the semismooth Newton solver given in Algorithm \ref{eq:SSN_iterates}. For this purpose, we assume the existence of a solution $(\mv{a}^{\ast}, \mv{b}^{\ast}, \mv{c}^{\ast} )$ of \eqref{eq:semismooth_newtonF_02}.
        
        For each $\bs{H}\in\partial\F (\mv{a}^{\ast}, \mv{b}^{\ast}, \mv{c}^{\ast})$, which is of the structure \eqref{eqn: H_structure}, we consider the block matrix $\bs{E}$ of the form
        \begin{align*}
            \bs{E}=\begin{pmatrix}
                \bs{A} & \bs{B}\\
                \bs{B}^\top & \bs{C}
            \end{pmatrix}
        \end{align*}
        as defined in \eqref{eq: E Definition}.
        Then, the Schur complement of $\bs{E}$ with respect to $\bs{A}$ is given by
        \begin{align*}
            \bs{S_E} := \bs{C}-\bs{B}^\top\bs{A}^{\m 1}\bs{B}.
        \end{align*}
        Note that $\bs{S_E}$ is symmetric and positive definite by the positive definiteness of $\bs{A}$ and $\bs{E}$, see \cite{ref:Zhang2005}. Thus, the Schur complement of $\bs{H}$ with respect to $\bs{E}$ results in
        \begin{align*}
            \bs{S_H}:=\bs{Y}-
            \begin{pmatrix}
                \bs{0} &\bs{X}
            \end{pmatrix}\:
            \bs{E}^{\m 1}
            \begin{pmatrix}
                \bs{0}\\
                \bs{D}
            \end{pmatrix}
            = \bs{Y} - \bs{X} \, \bs{S_E}^{\m 1} \bs{D}.
        \end{align*}
        %

        %
        
        %

            %
        \begin{lemma}\label{prop:SHregular}
            Let the pair $(\bs{X}_i,\bs{Y}_i)$ be eigencomplementary for all $i\in\ul{N}$. Then, the Schur complement $\bs{S_H}$ of $\bs{H}$ with respect to $\bs{E}$ is regular.
        \end{lemma}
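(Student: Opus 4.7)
The plan is to establish regularity of $\bs{S_H}$ by showing that the homogeneous equation $\bs{S_H}\mv{u}=\mv{0}$ forces $\mv{u}=\mv{0}$. I would introduce the auxiliary vector $\mv{w}:=\bs{S_E}^{\m 1}\bs{D}\mv{u}$ so that $\bs{S_H}\mv{u}=\mv{0}$ becomes equivalent to the coupled linear system
\[
\bs{Y}\mv{u} = \bs{X}\mv{w}, \qquad \bs{S_E}\mv{w} = \bs{D}\mv{u}.
\]
The block-diagonal structure of $\bs{X}$ and $\bs{Y}$ then decouples the first equation into $\bs{Y}_i\mv{u}_i = \bs{X}_i\mv{w}_i$ for each $i\in\ul{N}$, where $\mv{u}_i,\mv{w}_i\in\RR^L$ denote the corresponding $L$-blocks of $\mv{u}$ and $\mv{w}$. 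The heart of the argument is then to show, blockwise, that $\mv{u}_i^\top\mv{w}_i \leq 0$ for every $i\in\ul{N}$, by exploiting the eigencomplementarity hypothesis.

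I would distinguish three cases. First, if $\bs{X}_i$ is regular (and thus negative definite), Lemma~\ref{lem: XY neg semidef}(1) yields $\bs{X}_i^{\m 1}\bs{Y}_i \preccurlyeq 0$; moreover, the simultaneous diagonalizability noted in Remark~\ref{re: Eigencomp}\textbf{(c)} renders this product symmetric. Writing $\mv{w}_i = \bs{X}_i^{\m 1}\bs{Y}_i\mv{u}_i$ therefore gives $\mv{u}_i^\top\mv{w}_i = \mv{u}_i^\top \bs{X}_i^{\m 1}\bs{Y}_i\mv{u}_i \leq 0$. Second, the case that $\bs{Y}_i$ is regular is treated analogously via Lemma~\ref{lem: XY neg semidef}(2), expressing $\mv{u}_i = \bs{Y}_i^{\m 1}\bs{X}_i\mv{w}_i$ and using symmetry of $\bs{X}_i\bs{Y}_i^{\m 1}$ to conclude $\mv{u}_i^\top\mv{w}_i = \mv{w}_i^\top\bs{X}_i\bs{Y}_i^{\m 1}\mv{w}_i \leq 0$. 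Third, if both $\bs{X}_i$ and $\bs{Y}_i$ are singular, Lemma~\ref{lem: Singular Eigencomp Orthogonal} applied to $\bs{Y}_i\mv{u}_i = \bs{X}_i\mv{w}_i$ directly delivers $\mv{u}_i^\top\mv{w}_i = 0$.

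Summing these contributions weighted by the positive entries $D_i$ of $\bs{D}$ then gives
\[
\mv{w}^\top \bs{D}\mv{u} \;=\; \sum_{i=1}^N D_i\,\mv{u}_i^\top\mv{w}_i \;\leq\; 0.
\]
On the other hand, the second equation of the coupled system implies $\mv{w}^\top \bs{S_E}\mv{w} = \mv{w}^\top \bs{D}\mv{u}$, whose left-hand side is nonnegative by the positive definiteness of $\bs{S_E}$. Hence $\mv{w}^\top \bs{S_E}\mv{w} = 0$, and positive definiteness of $\bs{S_E}$ forces $\mv{w}=\mv{0}$. Then $\bs{D}\mv{u} = \bs{S_E}\mv{w} = \mv{0}$, and the invertibility of the diagonal matrix $\bs{D}$ forces $\mv{u}=\mv{0}$, completing the argument.

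The main obstacle I anticipate is the blockwise sign analysis: the inequality $\mv{u}_i^\top\mv{w}_i \leq 0$ is not immediate from $\bs{X}_i\preccurlyeq 0$ and $\bs{Y}_i\succcurlyeq 0$ alone. It requires combining the inverse-sign statements of Lemma~\ref{lem: XY neg semidef} with the simultaneous diagonalizability of Remark~\ref{re: Eigencomp}\textbf{(c)} in order to guarantee symmetry of the auxiliary products $\bs{X}_i^{\m 1}\bs{Y}_i$ and $\bs{X}_i\bs{Y}_i^{\m 1}$, and must be matched against the doubly-singular situation via Lemma~\ref{lem: Singular Eigencomp Orthogonal}. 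Once this pointwise inequality is in place, the coupling through $\bs{S_E}$ and $\bs{D}$ in the second equation provides precisely the matching nonnegative quantity $\mv{w}^\top\bs{S_E}\mv{w}$ that pinches the argument and forces $\mv{w}=\mv{0}$.
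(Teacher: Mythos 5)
Your proof is correct and takes essentially the same route as the paper's: introduce $\mv{w}=\bs{S_E}^{\m 1}\bs{D}\,\mv{u}$, reduce $\bs{S_H}\mv{u}=\mv{0}$ to the blockwise equations $\bs{Y}_i\mv{u}_i=\bs{X}_i\mv{w}_i$, and derive $\mv{u}_i^\top\mv{w}_i\leq 0$ via the same three-case split (using Lemma~\ref{lem: XY neg semidef} in the two regular cases and Lemma~\ref{lem: Singular Eigencomp Orthogonal} in the doubly-singular case) before invoking positive definiteness of $\bs{S_E}$ to force $\mv{u}=\mv{0}$. The paper merely casts the final step as the contradiction $0 < (\bs{D}\mv{h})^\top\bs{S_E}^{\m 1}(\bs{D}\mv{h}) \leq 0$ rather than directly concluding $\mv{w}=\mv{0}$, a purely cosmetic difference.
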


        \begin{proof}

            Let us assume that there is an $\mv{h}= (h_1,\ldots,h_{LN})^\top\in(\ker\bs{S_H})\setminus \{\mv{o}\}$ and let  $\mv{w} = (w_1,\ldots, w_{LN})^{\top}\in \RR^{LN}$ be determined by
               \begin{align*}
                   \mv{w} := \bs{S_E}^{\m 1} \bs{D} \, \mv{h}.
               \end{align*}
            For any $i\in\ul{N}$, we define the quantities
            \begin{align*}
                \mv{h}_i
                := ( h_{L(i-1)+1},\dots, h_{Li})^{\top} \in \RR^L, \qquad
                \mv{w}_i
                := (w_{L(i-1)+1}, \ldots, w_{Li} )^{\top} \in\RR^L.
            \end{align*}
             Note that since $\bs{X}$ and $\bs{Y}$ are block-diagonal, cf.~Lemma \ref{lem:structure_subgradient}, the equation
            \begin{align}\label{eq: Darstellung S_H h}
                \bs{S_H} \, \mv{h} 
                 = \big( \bs{Y} - \bs{X} \, \bs{S_E}^{\m 1} \bs{D} \big) \, \mv{h}
                 = \mv{o}
            \end{align} 
            turns out to be equivalent to
            \begin{align}\label{eq:SchurComp kernel}
                \bs{Y}_i \, \mv{h}_i
                 = \bs{X}_i \, \mv{w}_i \qquad \forall \,  i\in\underline{N}.
            \end{align}
            Finally, we denote by $\bs{D}_i$ the corresponding blocks of $\bs{D}$, i.e., $\bs{D}_i = D_i \, \bs{I}_{L\times L}$ for $i\in\ul{N}$.
            
           We now show that 
            \begin{align}\label{eq: hDw leq 0}
                \mv{h}_i^\top \, \bs{D}_i \, \mv{w}_i
                 \leq 0 \qquad \forall \, i\in\underline{N}
            \end{align}
            holds true.
            For this purpose, let $i\in\ul{N}$ be arbitrary but fixed. We distinguish three cases: \vspace{0.15cm}

                    \emph{\textbf{1}.} Let $\bs{Y}_i$ be regular. Then, the matrix $\bs{X}_i\bs{Y}_i^{\m 1}$ is negative semi-definite by Lemma \ref{lem: XY neg semidef} and, therefore, we can conclude from \eqref{eq:SchurComp kernel} and the symmetry of the matrices that
                    \begin{align*}
                        \mv{h}^\top_i \, \bs{D}_i \, \mv{w}_i
                         = D_i\,(\bs{Y}_i^{\m 1}\bs{X}_i\,\mv{w}_i)^\top\, \mv{w}_i = D_i\,\mv{w}_i^\top\,\bs{X}_i\bs{Y}_i^{\m 1}\, \mv{w}_i \leq 0.
                    \end{align*}
                   \emph{\textbf{2.}} Let $\bs{X}_i$ be regular. Then, the matrix $\bs{X}_i^{\m 1}\bs{Y}_i$ is negative semi-definite by Lemma \ref{lem: XY neg semidef} and, therefore, we can conclude from \eqref{eq:SchurComp kernel} that
                    \begin{align*}
                        \mv{h}^\top_i \, \bs{D}_i \, \mv{w}_i
                         = D_i \, \mv{h}^\top \, \mv{w}_i
                         = D_i \, \mv{h}^\top_i \, \bs{X}_i^{\m 1} \bs{Y}_i \, \mv{h}_i
                         \leq 0.
                    \end{align*}
                    \emph{\textbf{3.}} Let both $\bs{X}_i$ and $\bs{Y}_i$ be singular. Then, from \eqref{eq:SchurComp kernel} and Lemma~\ref{lem: Singular Eigencomp Orthogonal} it follows that $\mv{h}_i^\top\mv{w}_i=0$, which leads to 
                    \begin{align*}
                        0 = D_i \, \mv{h}_i^\top \, \mv{w}_i
                         = D_i \, \mv{h}_i^\top \, \bs{I}_{L\times L} \, \mv{w}_i
                         = \mv{h}_i^\top \, \bs{D}_i \, \mv{w}_i.
                    \end{align*}
                Thus, we have shown \eqref{eq: hDw leq 0}.
                The positive definiteness of $\bs{S_E}$ in combination with the definition of $\mv{w}$, then leads to the contradiction
                \begin{align*}
                    0 < ( \bs{D} \, \mv{h} )^\top \, \bs{S_E}^{-1} ( \bs{D} \, \mv{h})
                     = \sum_{i\in\ul{N}} \mv{h}_i^\top \, \bs{D}_i \, \mv{w}_i
                     \leq 0.
                \end{align*}
                Consequently, it holds that $\ker\bs{S_H} = \{\mv{o}\}$ which shows that $\bs{S_H}$ is regular.
        \end{proof}
        %
        %
        \begin{theorem}\label{thm:conditions_convergence}
            If the pair $(\bs{X}_i,\bs{Y}_i)$ is eigencomplementary for all $i\in\ul{N}$, then Algorithm~\ref{eq:SSN_iterates} is well-defined in a neighborhood of the solution $(\mv{a}^{\ast}, \mv{b}^{\ast}, \mv{c}^{\ast} )$ of \eqref{eq:semismooth_newtonF_02} and converges superlinearly. Moreover, if the involved functions $\mv{S}_i$, cf.~\eqref{eq: Def: s_i}, are $\alpha$-order semismooth with $\alpha\in(0,1]$, then the convergence is of order $1+\alpha$.
        \end{theorem}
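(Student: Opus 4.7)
The plan is to invoke the general semismooth Newton convergence framework of \cite{ref:Qi1993}, whose three essential hypotheses are: (i) local Lipschitz continuity of $\F$, (ii) ($\alpha$-order) semismoothness of $\F$, and (iii) nonsingularity of every element $\bs{H}\in\partial\F(\mv{a}^\ast,\mv{b}^\ast,\mv{c}^\ast)$. Items (i) and (ii) have already been recorded in Lemma~\ref{lem:structure_subgradient_abstract}, so the real content of the proof is item (iii). Almost all of the work for (iii) is in place: Lemma~\ref{prop:SHregular} shows that the Schur complement $\bs{S_H}$ of $\bs{H}$ with respect to $\bs{E}$ is regular, so the task reduces to transferring that regularity to $\bs{H}$ itself.

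First I would fix an arbitrary $\bs{H}\in\partial\F(\mv{a}^\ast,\mv{b}^\ast,\mv{c}^\ast)$. By Lemma~\ref{lem:structure_subgradient_abstract} it has the block form
\begin{align*}
  \bs{H}=\begin{pNiceArray}{cc|c}
   \bs{A} & \bs{B} & \bs{0}\\
   \bs{B}^\top & \bs{C} & \bs{D}\\
   \hline
   \bs{0} & \bs{X} & \bs{Y}
   \end{pNiceArray}
   = \begin{pmatrix} \bs{E} & \bs{P} \\ \bs{Q} & \bs{Y}\end{pmatrix},\qquad
   \bs{P}=\begin{pmatrix}\bs{0}\\\bs{D}\end{pmatrix},\ \bs{Q}=(\bs{0}\ \bs{X}).
\end{align*}
Because $\bs{E}$ is positive definite (and hence invertible) and $\bs{S_H}=\bs{Y}-\bs{Q}\bs{E}^{-1}\bs{P}=\bs{Y}-\bs{X}\bs{S_E}^{-1}\bs{D}$ is regular by Lemma~\ref{prop:SHregular} (whose hypothesis on eigencomplementarity of the pairs $(\bs{X}_i,\bs{Y}_i)$ is exactly our assumption), the block-LU factorization yields $\det\bs{H}=\det\bs{E}\cdot\det\bs{S_H}\neq 0$. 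Thus $\bs{H}$ is nonsingular, establishing (iii).

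Once nonsingularity of the entire Clarke subdifferential at the solution is in hand, I would apply the classical semismooth Newton convergence theorem of Qi--Sun: the nonsingularity of $\partial\F(\mv{a}^\ast,\mv{b}^\ast,\mv{c}^\ast)$ together with the upper semicontinuity of the set-valued map $\partial\F$ ensures that $\bs{H}_k$ in line~4 of Algorithm~\ref{eq:SSN_iterates} is invertible, and that the linear system in line~5 is uniquely solvable, on some neighborhood $U$ of $(\mv{a}^\ast,\mv{b}^\ast,\mv{c}^\ast)$. Hence Algorithm~\ref{eq:SSN_iterates} is well-defined whenever the initial iterate lies in $U$. The same theorem, combined with the semismoothness of $\F$ (Lemma~\ref{lem:structure_subgradient_abstract}), yields the superlinear local convergence of the iterates to $(\mv{a}^\ast,\mv{b}^\ast,\mv{c}^\ast)$. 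If in addition each $\mv{S}_i$ is $\alpha$-order semismooth, then $\F$ inherits $\alpha$-order semismoothness (only the nonlinear blocks $\mv{S}_i$ contribute, since $\mv{L}$ is affine and hence $C^\infty$), and the Qi--Sun theorem upgrades the rate of convergence to $1+\alpha$.

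The only place where any serious work could have been required is the nonsingularity of $\bs{H}$, but the eigencomplementarity machinery of Section~\ref{subsec:eigencomp}---in particular Lemmas~\ref{lem: XY neg semidef}, \ref{lem: Singular Eigencomp Orthogonal} and \ref{prop:SHregular}---has already handled this through the Schur-complement analysis. Consequently the proof of Theorem~\ref{thm:conditions_convergence} should be short: a one-line Schur-complement determinant computation deducing the regularity of $\bs{H}$ from that of $\bs{E}$ and $\bs{S_H}$, followed by a direct appeal to the convergence theorem from \cite{ref:Qi1993}.
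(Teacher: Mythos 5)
Your proposal follows exactly the paper's own argument: invoke \cite[Thm.~3.2]{ref:Qi1993}, use Lemma~\ref{lem:structure_subgradient_abstract} for Lipschitz continuity and ($\alpha$-order) semismoothness, and use Lemma~\ref{prop:SHregular} to get regularity of every $\bs{H}\in\partial\F(\mv{a}^\ast,\mv{b}^\ast,\mv{c}^\ast)$ from the regularity of the Schur complement $\bs{S_H}$ together with the invertibility of $\bs{E}$. The only difference is that you spell out the block-LU determinant factorization $\det\bs{H}=\det\bs{E}\cdot\det\bs{S_H}$, which the paper leaves as an implicit remark.
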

        \begin{proof}
            The well-definedness as well as the ($1+\alpha$)-order convergence of the semismooth Newton solver in Algorithm~\ref{eq:SSN_iterates} follow from \cite[Thm.~3.2]{ref:Qi1993}. Thereby, two preliminaries, besides the existance of a solution $(\mv{a}^{\ast}, \mv{b}^{\ast}, \mv{c}^{\ast} )$, have to be fulfilled:
            \begin{enumerate}
                \item $\F$ is locally Lipschitz-continuous and ($\alpha$-order) semismooth at the solution $(\mv{a}^{\ast}, \mv{b}^{\ast}, \mv{c}^{\ast} )$.
                \item All $\bs{H} \in \partial\F(\mv{a}^{\ast}, \mv{b}^{\ast}, \mv{c}^{\ast})$ are regular.
            \end{enumerate}
            By Lemma~\ref{lem:structure_subgradient_abstract}, $\F$ is Lipschitz-continuous and ($\alpha$-order) semismooth. Furthermore, the regularity of all $\bs{H}\in\partial\F(\mv{a}^{\ast}, \mv{b}^{\ast}, \mv{c}^{\ast})$ follows from Lemma~\ref{prop:SHregular} as the regularity of $\bs{S}_{\bs{H}}$ implies the regularity of $\bs{H}$.
        \end{proof}
        \begin{remark}
            Note that the results of Section~\ref{sec:semismooth_NS} still apply if there exist $i\in\ul{N}$ such that the pair $(\bs{Y}_i,\bs{X}_i)$ is eigencomplementary instead of $(\bs{X}_i,\bs{Y}_i)$, cf. Remark~\ref{re: Eigencomp} on the ordering of the matrices in the definition of eigencomplementarity.
        \end{remark}


    \smallskip

    \section{Application to an Elastoplastic Problem}\label{sec:Application}
    In this section, we consider a model problem of elastoplasticity with linearly kinematic hardening and its mixed variational formulation. We refer to \cite{ref:Bammer2022Icosahom, ref:Bammer2023Apriori, ref:Bammer2023Posteriori} for more details on the subject.


    \smallskip

        \subsection{The model problem}\label{sec:Elastoplasticity with Hardening}

        Let the reference configuration of an elastoplastic body be represented by a bounded polygonal domain $\Omega\subset\RR^d$, $d\in\lbrace 2,3\rbrace$, with Lipschitz-boundary $\Gamma := \partial\Omega$ and outer unit normal $\mv{n}$. Moreover, let the body be clamped at the Dirichlet-boundary part $\Gamma_D\subseteq\Gamma$, which we assume to be a closed set of positive surface measure, and set $\Gamma_N := \Gamma\setminus \Gamma_D$. Then, for given volume force $\mv{f}:\Omega\to\RR^d$ and surface traction $\mv{g}:\Omega\to\RR^d$ the model problem of elastoplasticity with linearly kinematic hardening, see, e.g., \cite{ref:Han2013}, is to find a vector-valued displacement field $\mv{u}:\Omega\to \RR^d$ and a matrix-valued plastic strain $\bs{p}:\Omega\to\SS_{d,0}$, with
        \begin{align*}
            \SS_{d,0} := \bigg\lbrace \bs{q} \in \SS_d  \; ; \,  \tr(\bs{q}) := \sum_{i\in\ul{d}} q_{ii} = 0 \bigg\rbrace,
        \end{align*}
        such that
        \begin{align}\label{eq:model_problem}
            \begin{split}
                -\operatorname{div} \bs{\sigma}(\mv{u},\bs{p}) &= \mv{f}   \; \qquad \;  \text{ in } \Omega, \\
                \mv{u} &= \mv{o}                                           \;  \qquad \,  \text{ on } \Gamma_D, \\
                \bs{\sigma}(\mv{u},\bs{p}) \, \mv{n} &= \mv{g}  \; \qquad \, \text{ on } \Gamma_N, \\
                \bs{\sigma}(\mv{u},\bs{p})-\HH \, \bs{p} &\in \partial j(\bs{p})  \quad   \text{in } \Omega.
            \end{split}
        \end{align}
        The stress tensor $\bs{\sigma}(\cdot)$ is given by the constitutive equation $\bs{\sigma}(\mv{u}) := \CC \, \big( \bs{\varepsilon}(\mv{u}) - \bs{p} \big)$ with the fourth-order elasticity tensor $\CC$ and the strain tensor $\bs{\varepsilon}(\cdot)$, which is given by the relation $\bs{\varepsilon}(\mv{u}) := \frac{1}{2} \, \big( \nabla\mv{u} + (\nabla\mv{u})^{\top} \big)$. Furthermore,  we denote by $\HH$ the fourth-order hardening tensor. We suppose that the elasticity tensor $\CC$ and the hardening tensor $\HH$ have the usual symmetry properties and are uniformly elliptic and uniformly bounded. Finally, $\partial j(\cdot)$ represents the subdifferential of the dissipation function $j(\cdot)$, which is defined as $j(\bs{q}) := \sigma_y \, \abs{\bs{q}}_F$ for $\bs{q}\in\SS_{d,0}$. Here, $\sigma_y$ is the yield stress in uniaxial tension that is assumed to be a positive constant and $\abs{ \, \cdot \, }_F$ is the Frobenius norm, which is induced by the Frobenius inner product $\bs{p} : \bs{q} := \sum_{i,j\in\ul{d}} p_{ij} \, q_{ij}$ for $\bs{p}=(p_{ij}), \bs{q}=(q_{ij})\in\RR^{d\times  d}$.


    \smallskip

    \subsection{Weak formulation}

        Let $(\cdot,\cdot)_{0,\Omega}$ denote the $L^2(\Omega, X)$ inner product for $X\in\lbrace \RR, \RR^d, \RR^{d \times d}\rbrace$, inducing the $L^2$-norm $ \norm{\, \cdot \, }_{0, \Omega}$, respectively, and define the vector spaces
        \begin{align*}
            V := \big\lbrace \mv{v} \in H^1(\Omega, \RR^d) \; ; \; \mv{v}_{\, | \, \Gamma_D} = \mv{o} \big\rbrace, \qquad
            Q := L^2(\Omega,\SS_{d,0}).
        \end{align*}
        Thereby, $H^1(\Omega,\RR^d)$ is the space of vector-valued functions $\mv{v}=(v_1,\ldots,v_d)^{\top}$ with components $v_i\in H^1(\Omega)$
        and $\mv{v}_{\, | \, \Gamma_D}$ denotes the trace of $\mv{v}$ on the boundary part $\Gamma_D$. Note that $\VV := V\times Q$ is a Hilbert space, which can be equipped with the norm
        \begin{align*}
            \norm{(\mv{v},\bs{q})} ^2
            := \norm{\mv{v}}_{1,\Omega}^2 + \norm{\bs{q}}_{0,\Omega}^2 ,
        \end{align*}
        where $\norm{\mv{v}}_{1,\Omega}^2 := \norm{\mv{v}}_{0,\Omega}^2 + \abs{\mv{v}}_{1,\Omega}^2$ and $\abs{\mv{v}}_{1,\Omega}^2 := \big( \bs{\varepsilon}(\mv{v}), \bs{\varepsilon}(\mv{v}) \big)_{0,\Omega}$. Due to Korn's inequality, the norm $\norm{ \, \cdot \,}_{1,\Omega}$ is equivalent to the usual Sobolev norm on $V$. Furthermore, we denote by $V^{\ast}$ the dual space of $V$ endowed with the dual norm $\norm{ \, \cdot \,}_{V^{\ast}}$ and write $\langle\cdot,\cdot\rangle$ for the duality pairing between $V$ and $V^{\ast}$. Finally, let $H^{-1/2}(\Gamma_N,\RR^d)$ be the dual space of the trace space of $V$ restricted to $\Gamma_N$ and write $\langle\cdot,\cdot\rangle_{\Gamma_N}$ for the duality pairing.

        We define the bilinear form $a:\VV\times \VV\to \RR$ as
        \begin{equation*}
            a\big( (\mv{u},\bs{p}), (\mv{v},\bs{q}) \big)
            := \big( \bs{\sigma}(\mv{u}, \bs{p}), \bs{\varepsilon}(\mv{v}) - \bs{q} \big)_{0,\Omega} + (\HH \, \bs{p}, \bs{q})_{0,\Omega},
        \end{equation*}
        the plasticity functional $\psi: Q\to \RR$ as
        \begin{equation*}
            \psi(\bs{q})
            := (\sigma_y, \abs{\bs{q}}_F)_{0,\Omega},
        \end{equation*}
        and the linear form $\ell: V\to \RR$ as
        \begin{align*}
            \ell(\mv{v}):= \langle \mv{f}, \mv{v}\rangle + \langle \mv{g}, \mv{v}\rangle_{\Gamma_N},
        \end{align*}
         where $\mv{f}\in V^{\ast}$ and $\mv{g}\in H^{-1/2}(\Gamma_N,\RR^d)$. Therewith, we consider a well-established weak formulation of the model problem \eqref{eq:model_problem}, see, e.g.,~\cite{ref:Han2013}: \emph{Find a pair $(\mv{u},\bs{p})\in \VV$ such that the variational inequality of the second kind
        \begin{align}\label{eq:variq_second_kind}
            a\big( (\mv{u},\bs{p}), (\mv{v}-\mv{u},\bs{q}-\bs{p}) \big) + \psi(\bs{q}) - \psi(\bs{p}) \geq \ell(\mv{v}-\mv{u}) \qquad 
            \forall \, (\mv{v},\bs{q})\in \VV
        \end{align}
        holds true.} Due to the properties of the elasticity tensor $\CC$ and hardening tensor $\HH$, the bilinear form $a(\cdot,\cdot)$ is symmetric, continuous, and $\VV$-elliptic, cf.~\cite{ref:Han2013}. The plasticity functional $\psi(\cdot)$ is convex, Lipschitz-continuous, and subdifferentiable, see~\cite{ref:Bammer2023Apriori}, and the linear form $\ell(\cdot)$ is continuous for $\mv{f}\in V^{\ast}$ and $\mv{g}\in H^{-1/2}(\Omega,\RR^d)$.  With these assumptions, there exists a unique solution $(\mv{u},\bs{p})\in \VV$ of \eqref{eq:variq_second_kind}, cf.~\cite{ref:Han2013}.

        For a mixed variational formulation of \eqref{eq:model_problem}, we introduce the non-empty, closed, and convex set 
        \begin{align*}
            \Lambda 
            := \big\lbrace \bs{\mu}\in Q \; ; \; \abs{ \bs{\mu} }_F \leq \sigma_y \text{ a.e.~in }\Omega \big\rbrace
        \end{align*}
        of admissible Lagrange multipliers that can equivalently be represented as
        \begin{align*}
            \Lambda
            = \big\lbrace \bs{\mu}\in Q \; ; \; (\bs{\mu}, \bs{q})_{0,\Omega} \leq \psi(\bs{q}) \text{ for all } \bs{q}\in Q \big\rbrace,
        \end{align*}
        cf.~\cite{ref:Bammer2023Apriori}. Thereby, the Frobenius norm in the definition of the plasticity functional $\psi(\cdot)$ can be resolved by a Lagrange multiplier as $\psi(\bs{q}) = \sup_{\bs{\mu}\in\Lambda} (\bs{\mu}, \bs{q})_{0,\Omega}$ for any $\bs{q}\in Q$. Then, a mixed variational formulation of \eqref{eq:model_problem} is given by: \emph{Find a triple $(\mv{u},\bs{p},\bs{\lambda})\in \VV \times \Lambda$ such that}
        \begin{align}\label{eq:mixed_variationalF}
            \begin{split}
                a\big( (\mv{u},\bs{p}), (\mv{v},\bs{q}) \big) + (\bs{\lambda}, \bs{q})_{0,\Omega} 
                &= \ell(\mv{v})
                 \qquad \forall \, (\mv{v},\bs{q})\in \VV, \\
                (\bs{\mu}-\bs{\lambda}, \bs{p})_{0,\Omega} 
                &\leq 0
                 \quad\qquad \; \forall \, \bs{\mu}\in\Lambda.
            \end{split}
        \end{align}
        In fact, the weak formulations \eqref{eq:variq_second_kind} and \eqref{eq:mixed_variationalF} are equivalent in the sense that if $(\mv{u},\bs{p})\in \VV$ solves \eqref{eq:variq_second_kind}, then $(\mv{u},\bs{p},\bs{\lambda})$ with
        \begin{align}\label{eq:repres_continuousLambda}
            \bs{\lambda} 
            = \operatorname{dev}\big( \bs{\sigma}(\mv{u},\bs{p})-\HH \, \bs{p} \big)
        \end{align}
        is a solution to \eqref{eq:mixed_variationalF} and, conversely, if $(\mv{u},\bs{p},\bs{\lambda})\in \VV \times \Lambda$ solves \eqref{eq:mixed_variationalF}, then $(\mv{u},\bs{p})$ is a solution of \eqref{eq:variq_second_kind} and the identity \eqref{eq:repres_continuousLambda} is true; see \cite{ref:Bammer2023Apriori} for a proof. Here, $\operatorname{dev}\bs{\tau}:= \bs{\tau} - \frac{1}{d}\,\operatorname{tr}(\bs{\tau})\,\bs{I}_{d\times d}$ denotes the deviatoric part of a matrix $\bs{\tau}\in\RR^{d\times d}$.


    \smallskip
    
    \subsection{$hp$-Finite Element Discretization}\label{sec:discretization}

    For an $hp$-finite element discretization of the weak formulations \eqref{eq:variq_second_kind} and \eqref{eq:mixed_variationalF}, respectively, let $\T_{h}$ be a locally quasi-uniform finite element mesh of $\Omega$ consisting of convex and shape regular quadrilaterals or hexahedrons. Moreover, we choose $\wh{T}:=[\m 1,1]^d$ as reference element and let $\mv{M}_T:\wh{T}\to T$ denote the bi/trilinear bijective mapping for $T\in\T_h$. We set $h := (h_T)_{T\in\T_h}$ and $p := (p_T)_{T\in\T_h}$ where $h_T$ and $p_T$ represent the local element size and the polynomial degree, respectively. We assume that the local polynomial degrees of neighboring elements are comparable in the sense of \cite{ref:Melenk2005}. For the discretization of the displacement field $\mv{u}$ and the plastic strain $\bs{p}$, we use the conforming $hp$-finite element spaces
    \begin{align*}
        V_{hp} &:= \Big\lbrace \mv{v}_{hp}\in V \; ; \; \mv{v}_{hp \, | \, T}\circ \mv{M}_T\in \big(\PP_{p_T}(\wh{T})\big)^d \text{ for all } T\in\T_h\Big\rbrace, \\
        Q_{hp} &:= \Big\lbrace \bs{q}_{hp}\in Q \; ; \; \bs{q}_{hp \, | \, T}\circ \mv{M}_T\in \big(\PP_{p_T-1}(\wh{T})\big)^{d\times d} \text{ for all } T\in\T_h\Big\rbrace,
    \end{align*}
    where $\PP_{p_T}(\wh{T})$ is the space of polynomials up to degree $p_T$ on the reference element $\wh{T}$ and set $\VV_{hp} := V_{hp}\times Q_{hp}$. To obtain a discretization of the set $\Lambda$, we need a computable approximation $\psi_{hp}(\cdot)$ of the plasticity functional $\psi(\cdot)$. To that end, let $\hat{\mv{x}}_{k,T}\in \widehat{T}$ be the tensor product Gauss quadrature points on $\widehat{T}$ for $k \in \ul{n_T}$, where $n_T := p_T^d$ for $T\in\T_h$. The corresponding positive weights are denoted by $\hat{\omega}_{k,T}\in\RR$. Let us introduce the mesh dependent quadrature rule 
    \begin{align*}
        \Q_{hp}(\cdot)
        := \sum_{T\in\T_h} \Q_{hp,T}(\cdot),
    \end{align*}
    where the local quantities $\Q_{hp, T}(\cdot)$ are given by
    \begin{align*}
        \Q_{hp, T}(f) :=
        \begin{cases}
            |T| \, f\big(\mv{M}_T(\mv{0})\big), & \text{if } p_T = 1, \\
            \sum_{k=1}^{n_T} \hat{\omega}_{k,T} \, |\det \nabla \mv{M}_T(\hat{\mv{x}}_{k,T})| \; f\big(\mv{M}_T(\hat{\mv{x}}_{k,T})\big), & \text{if } p_T \geq 2,
        \end{cases}
        \qquad T\in\T_h
    \end{align*}
    as introduced in \cite{ref:Bammer2023Apriori}.
    Therewith, we define the discrete plasticity functional $\psi_{hp}: Q_{hp}\to\RR$ by
    \begin{align*}
        \psi_{hp}(\bs{q}_{hp}) 
        := \Q_{hp}\big( \sigma_y \, \abs{\bs{q}_{hp}}_F \big).
    \end{align*}
    We note that $\psi_{hp}(\cdot)$ results from a nodal interpolation of the Frobenius norm appearing in the definition of $\psi(\cdot)$ and can be exactly evaluated by the mesh dependent quadrature rule $\Q_{hp}(\cdot)$, see \cite{ref:Bammer2022Icosahom, ref:Bammer2023Apriori,ref:Gwinner2009}.   
    A discretization of the variational inequality \eqref{eq:variq_second_kind} is given by: \emph{Find a pair $(\mv{u}_{hp},\bs{p}_{hp})\in\VV_{hp}$ such that}
    \begin{align}\label{eq:discrete_variq_second_kind}
        a\big( (\mv{u}_{hp},\bs{p}_{hp}), (\mv{v}_{hp}-\mv{u}_{hp}, \bs{q}_{hp}-\bs{p}_{hp}) \big) + \psi_{hp}(\bs{q}_{hp}) - \psi_{hp}(\bs{p}_{hp}) 
        \geq \ell(\mv{v}_{hp} - \mv{u}_{hp})
        \qquad \forall \, (\mv{v}_{hp},\bs{q}_{hp})\in \VV_{hp}.
    \end{align}
    To obtain a discretization of the mixed variational formulation \eqref{eq:mixed_variationalF} we introduce the non-empty, convex, and closed set of admissible discrete Lagrange multipliers by
    \begin{align*}
        \Lambda_{hp} := \Big\lbrace \bs{\mu}_{hp}\in Q_{hp} \; ; \; (\bs{\mu}_{hp},\bs{q}_{hp})_{0,\Omega} \leq \psi_{hp}(\bs{q}_{hp}) \text{ for all } \bs{q}_{hp}\in Q_{hp}  \Big\rbrace,
    \end{align*}
    which leads to the discrete mixed formulation: \emph{Find a triple $(\mv{u}_{hp},\bs{p}_{hp},\bs{\lambda}_{hp})\in\VV_{hp}\times \Lambda_{hp}$ such that}
    \begin{subequations}\label{eq:discrete_mixed_variationalF}
        \begin{alignat}{2}
            a\big( (\mv{u}_{hp},\bs{p}_{hp}),(\mv{v}_{hp},\bs{q}_{hp}) \big) + (\bs{\lambda}_{hp}, \bs{q}_{hp})_{0,\Omega} &= \ell(\mv{v}_{hp}) & \qquad & \forall \, (\mv{v}_{hp},\bs{q}_{hp}) \in \VV_{hp},\label{eq:discrete_mixed_variationalF_01}\\
            (\bs{\mu}_{hp}-\bs{\lambda}_{hp}, \bs{p}_{hp})_{0,\Omega}
            &\leq 0 & \quad & \forall \, \bs{\mu}_{hp} \in \Lambda_{hp}.\label{eq:discrete_mixed_variationalF_02}
        \end{alignat}
    \end{subequations}
    In \cite{ref:Bammer2023Posteriori} it is shown that the two discretizations \eqref{eq:discrete_variq_second_kind} and \eqref{eq:discrete_mixed_variationalF} are equivalent in the sense that if $(\mv{u}_{hp},\bs{p}_{hp})\in \VV_{hp}$ solves \eqref{eq:discrete_variq_second_kind}, then $(\mv{u}_{hp},\bs{p}_{hp},\bs{\lambda}_{hp})$ with
    \begin{align}\label{eq:repres_discreteLambda}
        \bs{\lambda}_{hp}
        = \mathcal{P}_{hp}\big( \dev (\bs{\sigma}(\mv{u}_{hp},\bs{p}_{hp})-\HH \, \bs{p}_{hp}) \big),
    \end{align}
    is a solution to \eqref{eq:discrete_mixed_variationalF}, where $\mathcal{P}_{hp} : Q\to Q_{hp}$ denotes the $L^2$-projection operator. Conversely, if $(\mv{u}_{hp},\bs{p}_{hp},\bs{\lambda}_{hp})\in \VV_{hp} \times \Lambda_{hp}$ solves \eqref{eq:discrete_mixed_variationalF}, then $(\mv{u}_{hp},\bs{p}_{hp})$ is a solution of \eqref{eq:discrete_variq_second_kind} and the identity \eqref{eq:repres_discreteLambda} holds. Moreover, it is shown in \cite{ref:Bammer2023Posteriori} that there exists a unique solution to the discrete variational inequality \eqref{eq:discrete_variq_second_kind}. Due to the equivalence of the two discrete formulations, there consequently exists a solution $(\mv{u}_{hp},\bs{p}_{hp},\bs{\lambda}_{hp})\in \VV_{hp} \times \Lambda_{hp}$ to the discrete mixed formulation \eqref{eq:discrete_mixed_variationalF}, where the first two components are unique. In \cite{ref:Bammer2023Apriori} the uniqueness of the discrete Lagrange multiplier $\bs{\lambda}_{hp}$, norm-convergence and a priori error estimates are shown, under the assumption
    \begin{align}\label{eq:requirement_detF}
        \det \nabla \mv{M}_T \in \PP_1(\wh{T}) \qquad 
        \forall \, T\in\T_h \text{ with } p_T\geq 2.
    \end{align}
    Note that \eqref{eq:requirement_detF} only slightly limits the shape of the mesh elements in the case that $d=3$ and $p_T > 1$. Without assumption \eqref{eq:requirement_detF}, the convergence of higher-order finite element schemes based on \eqref{eq:discrete_variq_second_kind} is an open problem.


    \smallskip
    
    \subsection{Algebraic Representation of the Mixed Problem}\label{sec:system_decopled_NE}

    In order to apply the Newton Solver of Algorithm~\ref{eq:SSN_iterates}, we rewrite the discrete mixed problem \eqref{eq:discrete_mixed_variationalF} in terms of a decoupled system of nonlinear equations of the form \eqref{eq:semismooth_newtonF}. Therefore, we first decouple the constraints in $\Lambda_{hp}$ and \eqref{eq:discrete_mixed_variationalF_02}. For this purpose, let $\{\wh{\phi}_{k,T}\}_{k\in\ul{n_T}}$ be the Lagrange basis functions on $\wh{T}$ defined via the Gauss points $\hat{\mv{x}}_{l,T}$, i.e.,
    \begin{align*}
        \wh{\phi}_{k,T}\in\PP_{p_T-1}(\wh{T}), \quad
        \wh{\phi}_{k,T}(\hat{\mv{x}}_{l,T}) = \delta_{kl} \qquad
        \forall \, k,l\in\ul{n_T} \quad \forall \, T\in\T_h,
    \end{align*}
    where $\delta_{kl}$ is the usual Kronecker delta symbol. Moreover, let $\phi_1,\ldots,\phi_N$ be piecewise defined as
    \begin{align*}
        \phi_{\zeta(k,T')\, | \, T} 
        := 
        \begin{cases}
                \wh{\phi}_{k,T'} \circ \mv{M}_{T'}^{-1},& \text{if } T=T',\\
                0,& \text{if } T\neq T', \\
        \end{cases} 
        \qquad  T,T'\in\T_h, \quad k\in\ul{n_T},
    \end{align*}
    where $\zeta:\big\lbrace (k,T) \; ; \;  T\in\mathcal{T}_h, \, k\in\ul{n_T} \big\rbrace\rightarrow \lbrace 1,\ldots,N \rbrace$ with $N :=\sum_{T\in\mathcal{T}_h} n_T$ is a one-to-one numbering. Finally, let $\varphi_1,\ldots,\varphi_N$ be the biorthogonal functions to $\phi_1,\ldots,\phi_N$, see, e.g.,~\cite{ref:Lamichhane2007biorthogonal}, which are uniquely determined by the conditions $\varphi_{\zeta(k,T)\, | \, T} \circ\mv{M}_T\in\PP_{p_T-1}(\wh{T})$ for $T\in\T_h$, $k\in\ul{n_T}$ and 
    \begin{align*}
        (\phi_i,\varphi_j)_{0,\Omega} 
        = \delta_{ij} \, (\phi_i, 1)_{0,\Omega} \qquad 
        \forall\, i,j\in\ul{N}.
    \end{align*}
    Under assumption \eqref{eq:requirement_detF} we have $\varphi_{\zeta(k,T)} = \phi_{\zeta(k,T)}$ for $k\in\ul{n_T}$ and $T\in\T_h$ and in any case it holds that
    \begin{align*}
        Q_{hp} 
        = \bigg\lbrace \sum_{i\in\ul{N}} \bs{q}_i \, \phi_i \; ; \; \bs{q}_i\in\SS_{d,0} \, \text{ for all } \, i\in\ul{N} \bigg\rbrace
        = \bigg\lbrace \sum_{i\in\ul{N}} \bs{\mu}_i \, \varphi_i \; ; \; \bs{\mu}_i\in\SS_{d,0} \, \text{ for all } \, i\in\ul{N}\bigg\rbrace,
    \end{align*}
    see~\cite{ref:Bammer2023Posteriori}. Therewith, and by defining the quantities $D_{i} := (\phi_i, 1)_{0,\Omega}>0$ and $\sigma_i := D_{i}^{-1} (\sigma_y, \phi_i)_{0,\Omega}$ for all $i\in\ul{N}$ we obtain 
    \begin{align*}
        \Lambda_{hp} 
        = \bigg\lbrace \sum_{i\in\ul{N}} \bs{\mu}_i \, \varphi_i \; ; \;  \bs{\mu}_i \in \SS_{d,0} \text{ and } \abs{ \bs{\mu}_i }_F \leq \sigma_i \, \text{ for all } \, i\in\ul{N} \bigg\rbrace,
    \end{align*}
    see \cite{ref:Bammer2022Icosahom} for a proof. Representing $\bs{\lambda}_{hp}\in \Lambda_{hp}$ and $\bs{p}_{hp}\in Q_{hp}$ as
    \begin{align*}
        \bs{\lambda}_{hp} = \sum_{i\in\ul{N}} \bs{\lambda}_i \, \varphi_i, \qquad
        \bs{p}_{hp} = \sum_{i\in\ul{N}} \bs{p}_i \, \phi_i, 
    \end{align*}
    it is shown in \cite{ref:Bammer2022Icosahom} that $\bs{\lambda}_{hp}$ satisfies the inequality \eqref{eq:discrete_mixed_variationalF_02} if and only if there holds $\bs{\lambda}_i : \bs{p}_i = \sigma_i \, \abs{ \bs{p}_i }_F$ for all $i\in\ul{N}$, which, in fact, decouples the constraints in $\Lambda_{hp}$ and \eqref{eq:discrete_mixed_variationalF_02}. If $(\mv{u}_{hp},\bs{p}_{hp},\bs{\lambda}_{hp})\in\VV_{hp}\times\Lambda_{hp}$ is the unique solution of \eqref{eq:discrete_mixed_variationalF} it is pointed out in \cite{ref:Bammer2022Icosahom} that the following implications hold true
    \begin{align*}
        \abs{ \bs{\lambda}_i }_F < \sigma_i 
        \;\Longrightarrow \; \bs{p}_i = \bs{0} , \qquad
        \abs{ \bs{\lambda}_i }_F = \sigma_i 
        \; \Longrightarrow \; \exists\, c \geq 0 \text{ with } \bs{p}_i = c \, \bs{\lambda}_i. 
    \end{align*}
    This suggests to introduce the nonlinear functions $\bs{\chi}_i : \SS_{d,0}\longrightarrow\SS_{d,0}$ given by
    \begin{align}\label{eq:defi_chi_i}
        \bs{\chi}_i(\bs{q}_i, \bs{\mu}_i) 
        := \max \big\lbrace \sigma_i, \abs{ \bs{\mu}_i + \varrho \, \bs{q}_i }_F \big\rbrace \, \bs{\mu}_i - \sigma_i \, (\bs{\mu}_i + \varrho \, \bs{q}_i) \qquad
        \forall \, i\in\ul{N},
    \end{align} 
    for some $\varrho > 0$, where 
    \begin{align}\label{eq: q_hp m_uhp definition}
        \bs{q}_{hp} = \sum_{i\in\ul{N}} \bs{q}_i \, \phi_i,\qquad
        \bs{\mu}_{hp} = \sum_{i\in\ul{N}} \bs{\mu}_i \, \varphi_i.
    \end{align}
    Therewith, the discrete Lagrange multiplier $\bs{\lambda}_{hp}\in\Lambda_{hp}$ satisfies the condition \eqref{eq:discrete_mixed_variationalF_02} if and only if there holds
    \begin{align}\label{eq: xi 0 condition}
        \bs{\chi}_i(\bs{p}_i, \bs{\lambda}_i) 
        = \bs{0} \qquad
        \forall \, i\in\ul{N},
    \end{align}
    see~\cite{ref:Bammer2022Icosahom} for a proof. This allows rewriting \eqref{eq:discrete_mixed_variationalF} as a system of nonlinear equations.    
       To this end, we first choose an appropriate basis of the space $\SS_{d,0}$. If $d = 2$, we take 
        \begin{align*}
            \bs{\Phi}_1 := \frac{1}{\sqrt{2}} \,  \begin{pmatrix} 1 & 0 \\ 0 & \m 1 \end{pmatrix}\; \text{ and }\;\bs{\Phi}_2 := \frac{1}{\sqrt{2}} \, \begin{pmatrix} 0 & 1 \\ 1 & 0 \end{pmatrix},
        \end{align*}
        and for $d=3$ we take \pagebreak
        \begin{align*}
            \bs{\Phi}_1 
            := \frac{1}{\sqrt{2}} & \, \begin{pmatrix} 1 & 0 & 0 \\ 0 & \m 1 & 0 \\ 0 & 0 & 0 \end{pmatrix}, \quad
            \bs{\Phi}_2 
            := \frac{1}{\sqrt{6}} \, \begin{pmatrix} 1 & 0 & 0 \\ 0 & 1 & 0 \\ 0 & 0 & \m 2 \end{pmatrix}, \quad
            \bs{\Phi}_3 
            := \frac{1}{\sqrt{2}} \, \begin{pmatrix} 0 & 1 & 0 \\ 1 & 0 & 0 \\ 0 & 0 & 0 \end{pmatrix}, \\
            & \quad \bs{\Phi}_4 
            := \frac{1}{\sqrt{2}} \, \begin{pmatrix} 0 & 0 & 1 \\ 0 & 0 & 0 \\ 1 & 0 & 0 \end{pmatrix}, \quad
            \bs{\Phi}_5 
            := \frac{1}{\sqrt{2}} \, \begin{pmatrix} 0 & 0 & 0 \\ 0 & 0 & 1 \\ 0 & 1 & 0 \end{pmatrix}. 
            \end{align*}
        Note that in both cases the basis functions are orthonormal with respect to the Frobenius inner product. Thus, we can represent $\bs{q}_{hp}\in Q_{hp}$ and $\bs{\mu}_{hp}\in\Lambda_{hp}$ by
        \begin{align}\label{eq:coeffvec_1}
            \bs{q}_{hp}
            = \sum_{i\in\ul{N}} \sum_{k\in\ul{L}} q_{L(i-1)+k} \, \bs{\Phi}_k \, \phi_i, \qquad
            \bs{\mu}_{hp}
            = \sum_{i\in\ul{N}} \sum_{k\in\ul{L}} \mu_{L(i-1)+k} \, \bs{\Phi}_k \, \varphi_i,
        \end{align}
        where $L := \frac{1}{2} \, (d-1) \, (d+2)$. Furthermore, let $\vartheta_1,\ldots,\vartheta_M$ be chosen such that $\lbrace \mv{e}_k \, \vartheta_i \; ; \; k\in\ul{d} \text{ and } i\in\ul{M} \rbrace$ forms a basis of $V_{hp}$, where $\mv{e}_k$ denotes the $k$-th Euclidean unit vector in $\RR^d$. Then, we can represent any $\mv{v}_{hp}\in V_{hp}$ by
        \begin{align}\label{eq: v_hp definition}
            \mv{v}_{hp} 
            = \sum_{i\in\ul{M}} \sum_{k\in\ul{d}} v_{d(i-1)+k} \, \mv{e}_k \, \vartheta_i.
        \end{align}
        Thus, an element $(\mv{v}_{hp},\bs{q}_{hp},\bs{\mu}_{hp})\in\VV_{hp}\times \Lambda_{hp}$ is completely represented by \eqref{eq: v_hp definition} and \eqref{eq: q_hp m_uhp definition} with the coefficient vectors
        \begin{align*}
            \mv{a}
            := (v_1,\ldots,v_{dM})^{\top}\in\RR^{dM}, \qquad
            \mv{b}
            := (q_1,\ldots,q_{LN})^{\top}\in\RR^{LN}, \qquad
            \mv{c} 
            := (\mu_1,\ldots,\mu_{LN})^{\top}\in\RR^{LN}.
        \end{align*}
        In particular, we indicate the coefficient vectors corresponding to the discrete solution $(\mv{u}_{hp},\bs{p}_{hp},\bs{\lambda}_{hp})\in\VV_{hp}\times\Lambda_{hp}$ of \eqref{eq:discrete_mixed_variationalF} by $\mv{a}^\ast,\mv{b}^\ast$ and $\mv{c}^\ast$, respectively. 
        Furthermore, we write
        \begin{align}\label{eq: Chi Basisdarstellung}
            \bs{\chi}_i(\bs{q}_i, \bs{\mu}_i) 
            = \sum_{k\in\ul{L}} \chi_{i,k} \, \bs{\Phi}_k \qquad
            \forall \, i\in\ul{N}.
        \end{align}
        Note that the $l$-th coefficient $\chi_{i,l}=\chi_{i,l}(\bs{q}_i, \bs{\mu}_i)$ in \eqref{eq: Chi Basisdarstellung} can be obtained by taking the Frobenius inner product of $\bs{\chi}_i(\bs{q}_i, \bs{\mu}_i)$ of \eqref{eq:defi_chi_i} with the basis function $\bs{\Phi}_l$. Recalling \eqref{eq:coeffvec_1}, we have
        \begin{align}
            \chi_{i,l} 
            &= \Bigg( \sum_{k\in\ul{L}} \max \bigg\lbrace \sigma_i, \Big( \sum_{k\in\ul{L}} ( \mu_{L(i-1)+k} + \varrho \, q_{L(i-1) + k} )^2 \Big)^{1/2} \bigg\rbrace \, \mu_{L(i-1)+k} \, \bs{\Phi}_k \nonumber \\ 
            & \qquad\qquad - \sigma_i \, ( \mu_{L(i-1)+k} + \varrho \, q_{L(i-1) + k} ) \, \bs{\Phi}_k \Bigg) : \bs{\Phi}_l \nonumber \\
            &= \max \big\lbrace \sigma_i, \abs{ \mv{c}_{ i} + \varrho \, \mv{b}_{ i} } \big\rbrace \, \mu_{L(i-1)+l} - \sigma_i \, \big( \mu_{L(i-1)+l} + \varrho \, q_{L(i-1)+l} \big), \label{eq:component_chi}
        \end{align}
        where $\abs{\, \cdot\,}$ denotes the Euclidean norm in $\RR^L$ and the vectors $ \mv{c}_{ i}, \mv{b}_{ i}\in\RR^L$ are given by
        \begin{align*}
            \mv{c}_{ i} 
            := (\mu_{L(i-1)+1}, \ldots,\mu_{Li})^{\top}, \qquad
            \mv{b}_{ i}
            := (q_{L(i-1)+1}, \ldots,q_{Li})^{\top}.
        \end{align*}
        By defining the semismooth functions $\mv{S}_{i}(\mv{b}_{i},\mv{c}_{i}) := (\chi_{i,1},\ldots,\chi_{i,L})^{\top}\in\RR^{L}$ for $i\in\ul{N}$, see e.g.~\cite{Hintermuller2002, Hueber2008}, we introduce the function  
        \begin{align}\label{eq:semismooth_newtonF_Elastoplast}
            \F:\RR^{dM}\times \RR^{LN}\times\RR^{LN}\to \RR^{dM+LN+LN},\qquad \F (\mv{a}, \mv{b}, \mv{c} ) 
            := 
            \begin{pmatrix}
                \mv{L}(\mv{a}, \mv{b}, \mv{c})\\
                \mv{S}_{1}(\mv{b}_{1},\mv{c}_{1}) \\
                \vdots \\
                \mv{S}_{N}(\mv{b}_{N},\mv{c}_{N})
            \end{pmatrix},
        \end{align}
        where $\mv{L}$ is the affine linear mapping of the form \eqref{def:lineartransform} with the symmetric, positive definite matrices $\bs{A}\in\RR^{dM\times dM}$ and $\bs{C}\in\RR^{LN\times LN}$, the positive definite diagonal matrix $\bs{D}\in\RR^{LN\times LN}$, and coupling matrix $\bs{B}\in\RR^{dM\times LN}$, which are given componentwise by \pagebreak
        \begin{alignat*}{2}
            A_{d(i-1)+k, \, d(j-1)+l} 
            &:= a\big( (\mv{e}_l \, \vartheta_j,\mathbf{0}) , (\mv{e}_k \, \vartheta_i,\mathbf{0}) \big)
            \qquad &&\forall \, i,j\in\ul{M} \quad \forall \, k,l\in\ul{d}, \\
            C_{L(i-1)+k, \, L(j-1)+l} 
            &:= a\big( (\mv{0},\boldsymbol{\Phi}_l \, \phi_i) , (\mv{0},\boldsymbol{\Phi}_k \, \phi_j) \big)
            \qquad &&\forall \, i,j\in\ul{N} \quad \forall \, k,l\in\ul{L}, \\
            D_{L(i-1)+k, \, L(j-1)+l} 
            &:= \delta_{lk} \, \delta_{ij} \, D_{i}
            \qquad &&\forall \, i,j\in\ul{N} \quad \forall \, k,l\in\ul{L}, \\
            B_{d(i-1)+k, \, L(j-1)+l} 
            &:= -a\big( (\mv{o},\bs{\Phi}_l \, \phi_j) , (\mv{e}_k \, \vartheta_i, \mathbf{0}) \big)
            \qquad &&\forall \, i\in\ul{N} \quad \forall \, j\in\ul{M} \quad \forall \, k\in\ul{d} \quad \forall \, l \in \ul{L}.
        \end{alignat*}
        The vector $\mv{l}\in\RR^{dM}$ in $\mv{L}$ has the components $l_{d(i-1)+k} := -\ell(\mv{e}_k \, \vartheta_i)$ for $i\in\ul{M}$ and $k\in\ul{d}$. Finally, the discrete mixed formulation \eqref{eq:discrete_mixed_variationalF} can equivalently be represented as the following system of nonlinear equations: \emph{Find the coefficient vectors $\mv{a}^\ast\in\RR^{dM}$, $\mv{b}^\ast\in\RR^{LN}$ and $ \mv{c}^\ast\in\RR^{LN}$ such that}
        \begin{align}\label{eq:semismooth_newtonF_Elastoplast_02}
            \F (\mv{a}^\ast, \mv{b}^\ast, \mv{c}^\ast ) 
            = \mv{0}.
        \end{align}
        By the unique existence of a solution of \eqref{eq:discrete_mixed_variationalF}, we conclude that the problem \eqref{eq:semismooth_newtonF_Elastoplast_02} has a unique solution $(\mv{a}^*,\mv{b}^*,\mv{c}^*)^\top\in\RR^{dM}\times \RR^{LN}\times \RR^{LN}$ as well and takes the form \eqref{eq:semismooth_newtonF_02}. 
        By Lemma~\ref{lem:structure_subgradient_abstract} the function $\F$ is Lipschitz-continuous and semismooth and any element $\bs{H}\in\partial\F(\mv{a},\mv{b},\mv{c})$ is of the form \eqref{eqn: H_structure}, i.e.,
                \begin{align}
                    \bs{H} = 
                    \begin{pNiceArray}{ccc}
                        \bs{A} & \bs{B} &\bs{0}   \\
                        \bs{B}^T& \bs{C} &\bs{D}  \\
                        \bs{0} & \bs{X} & \bs{Y}
                    \end{pNiceArray}
                \end{align}
                with block-diagonal matrices $\bs{X} = \diag(\bs{X}_1,\ldots,\bs{X}_N)$ and $\bs{Y} = \diag(\bs{Y}_1,\ldots,\bs{Y}_N)$.

    
    \smallskip
        
    \subsection{Superlinear Convergence of a Semismooth Newton Solver}\label{sec:ElastoSuperLinear}
    
        In the following, we denote by $\mv{x}\otimes\mv{y}$ the dyadic product of two vectors $\mv{x},\mv{y}\in\RR^L$ and, for $i\in\ul{N}$ and $\varrho > 0$, we define
        \begin{align*}
            \mv{v}_{\varrho,i} := \mv{c}_{i} + \varrho \, \mv{b}_{i}.
        \end{align*}
       \begin{lemma}\label{lem:structure_subgradient}
           
            For $i\in\ul{N}$, the matrices $\bs{X}_i,\bs{Y}_i\in\RR^{L\times L}$ are given by
            \begin{align}\label{eq:desiredStruc_Xi_Yi}
                \bs{X}_i
                = \tau_i\, \varrho \, \mv{c}_{i} \otimes \frac{\mv{v}_{\varrho,i}}{|\mv{v}_{\varrho,i}|} - \varrho \, \sigma_i \, \bs{I}_{L\times L}, \qquad
                \bs{Y}_i
                = \tau_i\left(\mv{c}_{i} \otimes  \frac{\mv{v}_{\varrho,i}}{|\mv{v}_{\varrho,i}|} + \big(|\mv{v}_{\varrho,i}|-\sigma_i\big) \, \bs{I}_{L\times L}\right)
            \end{align}
            for some $\tau_i\in[0,1]$. Specifically, in the cases $|\mv{v}_{\varrho,i}|<\sigma_i$ and $|\mv{v}_{\varrho,i}|>\sigma_i$ we have $\tau_i=0$ and $\tau_i = 1$, respectively.
        \end{lemma}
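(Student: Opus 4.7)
The plan is to compute the Clarke subdifferential of $\mv{S}_i$ with respect to $\mv{b}_i$ and $\mv{c}_i$ by exploiting the explicit form \eqref{eq:component_chi} of $s_{i,l}$, namely
\begin{align*}
    s_{i,l}(\mv{b}_i,\mv{c}_i)
    = \max\{\sigma_i,|\mv{v}_{\varrho,i}|\}\,\mu_{L(i-1)+l}
    - \sigma_i\bigl(\mu_{L(i-1)+l}+\varrho\,q_{L(i-1)+l}\bigr),
\end{align*}
and observing that the only source of non-smoothness is the scalar $\max\{\sigma_i,|\mv{v}_{\varrho,i}|\}$. Away from the set $\{|\mv{v}_{\varrho,i}|=\sigma_i\}$ this function is smooth, so the Jacobian blocks are uniquely determined, and on that set the Clarke subdifferential equals the convex hull of the one-sided limits.

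First I would treat the two smooth regimes. In the elastic case $|\mv{v}_{\varrho,i}|<\sigma_i$, the max equals $\sigma_i$, so $s_{i,l}=-\sigma_i\varrho\,q_{L(i-1)+l}$. Differentiating yields immediately $\bs{X}_i=-\varrho\sigma_i\,\bs{I}_{L\times L}$ and $\bs{Y}_i=\bs{0}$, matching \eqref{eq:desiredStruc_Xi_Yi} with $\tau_i=0$ (noting that the dyadic term is harmless thanks to the factor $\tau_i$, even though $\mv{v}_{\varrho,i}/|\mv{v}_{\varrho,i}|$ is well defined there). In the plastic case $|\mv{v}_{\varrho,i}|>\sigma_i$, the max equals $|\mv{v}_{\varrho,i}|$, and the chain rule gives
\begin{align*}
    \frac{\partial}{\partial q_{L(i-1)+k}}|\mv{v}_{\varrho,i}|
    =\varrho\,\frac{v_{\varrho,i,k}}{|\mv{v}_{\varrho,i}|},\qquad
    \frac{\partial}{\partial\mu_{L(i-1)+k}}|\mv{v}_{\varrho,i}|
    =\frac{v_{\varrho,i,k}}{|\mv{v}_{\varrho,i}|}.
\end{align*}
Reading off the $(l,k)$-entry of each Jacobian block recovers the formulas in \eqref{eq:desiredStruc_Xi_Yi} with $\tau_i=1$, where the identity parts of $\bs{X}_i$ and $\bs{Y}_i$ come from differentiating the linear terms $-\sigma_i\varrho\,q_{L(i-1)+l}$ and $(|\mv{v}_{\varrho,i}|-\sigma_i)\mu_{L(i-1)+l}$, respectively, and the dyadic parts come from differentiating the norm.

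The only delicate step is the transition regime $|\mv{v}_{\varrho,i}|=\sigma_i$. Here $\F$ is not differentiable, so I would invoke the definition of $\partial\F$ as the convex hull of limits of gradients taken at differentiability points. Approaching such a point by sequences staying in $\{|\mv{v}_{\varrho,i}|<\sigma_i\}$ gives the limit pair $(-\varrho\sigma_i\bs{I},\bs{0})$, while approaching from $\{|\mv{v}_{\varrho,i}|>\sigma_i\}$ gives the pair from the plastic case, which on the boundary simplifies since $|\mv{v}_{\varrho,i}|-\sigma_i=0$. Taking the convex combination with weights $1-\tau_i$ and $\tau_i$ for $\tau_i\in[0,1]$ produces exactly the claimed formula, and the identity block of $\bs{X}_i$ survives as $-\varrho\sigma_i\,\bs{I}_{L\times L}$ because it appears in both limits. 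The main care is to argue that no other elements of $\partial\F$ arise; this follows because the non-smooth scalar $\max\{\sigma_i,\cdot\}$ depends on $\mv{b}_i,\mv{c}_i$ only through $|\mv{v}_{\varrho,i}|$, so every gradient limit is determined by the one-sided regime it comes from, and the convex hull of these two extreme matrices parametrizes the subdifferential by a single scalar $\tau_i$, matching \eqref{eq:desiredStruc_Xi_Yi}.
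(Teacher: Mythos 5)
Your proposal is correct and follows essentially the same route as the paper's proof: compute the Jacobian blocks in the two smooth regimes $|\mv{v}_{\varrho,i}|\lessgtr\sigma_i$ directly from \eqref{eq:component_chi}, then invoke the definition of the Clarke subdifferential as the convex hull of gradient limits to parametrize the boundary case $|\mv{v}_{\varrho,i}|=\sigma_i$ by a scalar $\tau_i\in[0,1]$, noting that the $|\mv{v}_{\varrho,i}|-\sigma_i$ factor vanishes there so both limits fit the common template \eqref{eq:desiredStruc_Xi_Yi}. The only cosmetic difference is that the paper spells out the four sub-cases of the two approaching sequences (both from the plastic side, both from the elastic side, and the two mixed cases) explicitly, while you summarize them in a single convex-combination argument; also, a small slip in your aside about the elastic regime ($\mv{v}_{\varrho,i}/|\mv{v}_{\varrho,i}|$ may in fact be undefined when $\mv{v}_{\varrho,i}=\mv{0}$, but as you say the factor $\tau_i=0$ renders this moot).
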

        \begin{proof}
            The matrix $\bs{E}$ as defined in \eqref{eq: E Definition} is symmetric and positive definite, as $\bs{E}$ corresponds to the bilinear form $a(\cdot,\cdot)$. Let $(\mv{a},\mv{b},\mv{c})\in\RR^{dM}\times\RR^{LN}\times \RR^{LN}$ be arbitrary.
            For the structure of the matrices $\bs{X}_i$ and $\bs{Y}_i$, for $i\in\ul{N}$, we have to consider the components according to $\mv{S}_i(\mv{b}_i,\mv{c}_i)$ in \eqref{eq:semismooth_newtonF_Elastoplast}. Note that for any $i\in\ul{N}$ the function $\mv{S}_i(\mv{b}_i, \mv{c}_i)$ is differentiable as long as there either holds $|\mv{v}_{\varrho,i}|>\sigma_i$ or $|\mv{v}_{\varrho,i}|<\sigma_i$. We therefore distinguish several cases:
            \begin{enumerate}
                \item[\bf 1.] If for all $i\in\ul{N}$ there either holds $|\mv{v}_{\varrho,i}|>\sigma_i$ or $|\mv{v}_{\varrho,i}|<\sigma_i$, i.e., $\F$ is differentiable in $(\mv{a},\mv{b},\mv{c})$, we compute the partial derivatives of the component $\chi_{i,l}$:
                \begin{itemize}
                    \item[--] If $|\mv{v}_{\varrho,i}|>\sigma_i$ we obtain
                    \begin{align*}
                        \frac{\partial}{\partial q_j} \chi_{i,l} 
                        &= \varrho \, \frac{\mu_{L(i-1)+l} \, (\mu_j + \varrho \, q_j)}{ |\mv{v}_{\varrho,i}|} - \varrho \, \sigma_i \, \delta_{j,L(i-1)+l}, \\
                        \frac{\partial}{\partial \mu_j} \chi_{i,l} 
                        &= \frac{\mu_{L(i-1)+l} \, (\mu_j + \varrho \, q_j)}{ \abs{\mv{v}_{\varrho,i}} } + \big(|\mv{v}_{\varrho,i}|-\sigma_i\big) \, \delta_{j,L(i-1)+l}
                    \end{align*}
                    for all $i\in\ul{N}$ and $L(i-1)+1\leq j \leq Li$, which results in
                    \begin{align}\label{eq:strucure_Xi_Yi_C1}
                        \bs{X}_i
                        = \varrho \, \mv{c}_{i} \otimes \frac{\mv{v}_{\varrho,i}}{|\mv{v}_{\varrho,i}|} - \varrho \, \sigma_i \, \bs{I}_{L\times L}, \qquad
                        \bs{Y}_i
                        = \mv{c}_{i} \otimes  \frac{\mv{v}_{\varrho,i}}{|\mv{v}_{\varrho,i}|} + \big(|\mv{v}_{\varrho,i}|-\sigma_i\big) \, \bs{I}_{L\times L}.
                    \end{align}
                    Note that \eqref{eq:strucure_Xi_Yi_C1} is of the structure \eqref{eq:desiredStruc_Xi_Yi} with $\tau_i=1$.
                
                    \item[--] If $|\mv{v}_{\varrho,i}|<\sigma_i$ we obtain
                    \begin{align*}
                        \frac{\partial}{\partial q_j} \chi_{i,l} = \m \varrho \, \sigma_i \, \delta_{j,L(i-1)+l}, \qquad
                        \frac{\partial}{\partial \mu_j} \chi_{i,l} = 0
                    \end{align*}
                    for all $(i,l,j)\in \ul{N} \times \ul{L} \times \ul{dM}$, which results in
                    \begin{align}\label{eq:strucure_Xi_Yi_C2}
                        \bs{X}_i
                        = - \varrho \, \sigma_i \, \bs{I}_{L\times L}, \qquad
                        \bs{Y}_i
                        = \bs{0}.
                    \end{align}
                \end{itemize}
                Note that \eqref{eq:strucure_Xi_Yi_C2} is of the structure \eqref{eq:desiredStruc_Xi_Yi} with $\tau_i=0$.
                \item[\bf 2.] If there exists an index set $I\subseteq \ul{N}$ such that $|\mv{v}_{\varrho,i}| = \sigma_i$ for $i\in I$, $\F$ is not differentiable in $(\mv{a}, \mv{b}, \mv{c})$ and, hence, by the definition of the Clarke subdifferential any $\bs{H}\in\partial\F(\mv{a},\mv{b},\mv{c})$ takes the form
                \begin{align*}
                    \bs{H}
                     = t \, \lim_{n\to\infty} \nabla\F(\mv{a}_n,\mv{b}_n,\mv{c}_n) + (1-t) \, \lim_{n\to\infty} \nabla\F(\widetilde{\mv{a}}_n,\widetilde{\mv{b}}_n,\widetilde{\mv{c}}_n)
                \end{align*}
                for some $t\in[0,1]$. Here, $(\mv{a}_n,\mv{b}_n,\mv{c}_n), (\widetilde{\mv{a}}_n,\widetilde{\mv{b}}_n,\widetilde{\mv{c}}_n)\to(\mv{a},\mv{b},\mv{c})$ as $n\to\infty$ and $\F$ is differentiable in any point $(\mv{a}_n,\mv{b}_n,\mv{c}_n)$, $(\widetilde{\mv{a}}_n,\widetilde{\mv{b}}_n,\widetilde{\mv{c}}_n)$. Note, the limits only exist if for any $i\in\ul{N}$
                \begin{align*}
                    \vert \mv{v}_{\varrho,i}^{(n)}\vert > \sigma_i \quad \text{or} \quad 
                    \vert \mv{v}_{\varrho,i}^{(n)}\vert < \sigma_i, \qquad
                    \vert \widetilde{\mv{v}}_{\varrho,i}^{(n)}\vert > \sigma_i \quad \text{or} \quad 
                    \vert \widetilde{\mv{v}}_{\varrho,i}^{(n)}\vert < \sigma_i
                \end{align*}
                holds true for all but finitely many $n$. Additionally, for $i\in I$, we have $|\mv{v}_{\varrho,i}^{(n)}|, |\widetilde{\mv{v}}_{\varrho,i}^{(n)}| \to \sigma_i$ as $n\to\infty$. 
                \begin{itemize}
                    \item[--] If $\vert \mv{v}_{\varrho,i}^{(n)}\vert > \sigma_i$ and $\vert \widetilde{\mv{v}}_{\varrho,i}^{(n)}\vert > \sigma_i$ for all but finitely many $n$ as in \textbf{1.} we find that
                    \begin{align*}
                        \bs{X}_i 
                          &= t \, \bigg( \varrho \, \mv{c}_{i} \otimes \frac{\mv{v}_{\varrho,i}}{|\mv{v}_{\varrho,i}|} - \varrho \, \sigma_i \, \bs{I}_{L\times L} \bigg) 
                         + (1-t) \, \bigg( \varrho \, \mv{c}_{i} \otimes \frac{\mv{v}_{\varrho,i}}{|\mv{v}_{\varrho,i}|} - \varrho \, \sigma_i \, \bs{I}_{L\times L} \bigg) \\
                         &=\varrho \, \mv{c}_{i} \otimes \frac{\mv{v}_{\varrho,i}}{|\mv{v}_{\varrho,i}|} - \varrho \, \sigma_i \, \bs{I}_{L\times L}\\
                        \bs{Y}_i
                          &= t \, \bigg( \mv{c}_{i} \otimes \frac{\mv{v}_{\varrho,i}}{|\mv{v}_{\varrho,i}|} + \big(|\mv{v}_{\varrho,i}|-\sigma_i\big) \, \bs{I}_{L\times L} \bigg) + (1-t) \, \bigg( \mv{c}_{i} \otimes  \frac{\mv{v}_{\varrho,i}}{|\mv{v}_{\varrho,i}|} + \big(|\mv{v}_{\varrho,i}|-\sigma_i\big) \, \bs{I}_{L\times L} \bigg)\\
                          &=\mv{c}_{i} \otimes \frac{\mv{v}_{\varrho,i}}{|\mv{v}_{\varrho,i}|} + \big(|\mv{v}_{\varrho,i}|-\sigma_i\big) \, \bs{I}_{L\times L}
                    \end{align*}
                    for any $i\in\ul{N}$. Thus, for $i\in(\ul{N}\setminus I)$ we obtain the structure \eqref{eq:strucure_Xi_Yi_C1} and for $i\in I$, the formulas simplify to
                    \begin{align*}
                        \bs{X}_i 
                         = \frac{\varrho}{\sigma_i} \, \mv{c}_{i} \otimes \mv{v}_{\varrho,i} - \varrho \, \sigma_i \, \bs{I}_{L\times L}, \qquad
                        \bs{Y}_i
                         = \frac{1}{\sigma_i}\, \mv{c}_{i} \otimes \mv{v}_{\varrho,i}.
                    \end{align*}
                    \item[--] If $\vert \mv{v}_{\varrho,i}^{(n)}\vert < \sigma_i$ and $\vert \widetilde{\mv{v}}_{\varrho,i}^{(n)}\vert < \sigma_i$ for all but finitely many $n$ as in \textbf{1.} we find that
                    \begin{align*}
                        \bs{X}_i 
                          &= t \, \big( - \varrho \, \sigma_i \, \bs{I}_{L\times L} \big) 
                         + (1-t) \, \big( - \varrho \, \sigma_i \, \bs{I}_{L\times L} \big)
                          = - \varrho \, \sigma_i \, \bs{I}_{L\times L} \\
                        \bs{Y}_i
                          &= t \, \bs{0} + (1-t) \, \bs{0}
                          = \bs{0}
                    \end{align*}
                    for any $i\in\ul{N}$, which is \eqref{eq:desiredStruc_Xi_Yi} with $\tau_i = 0$.
                    \item[--] If $\vert \mv{v}_{\varrho,i}^{(n)}\vert > \sigma_i$ and $\vert \widetilde{\mv{v}}_{\varrho,i}^{(n)}\vert < \sigma_i$ for all but finitely many $n$ we find that
                    \begin{align*}
                        \bs{X}_i 
                          &= t \, \bigg( \varrho \, \mv{c}_{i} \otimes \frac{\mv{v}_{\varrho,i}}{|\mv{v}_{\varrho,i}|} - \varrho \, \sigma_i \, \bs{I}_{L\times L} \bigg)
                           + (1-t) \, \big( - \varrho \, \sigma_i \, \bs{I}_{L\times L} \big) \\
                          & \qquad = t \, \varrho \, \mv{c}_{i} \otimes \frac{\mv{v}_{\varrho,i}}{|\mv{v}_{\varrho,i}|} - \varrho \, \sigma_i \, \bs{I}_{L\times L} \\
                        \bs{Y}_i
                         &= t \, \bigg( \mv{c}_{i} \otimes \frac{\mv{v}_{\varrho,i}}{|\mv{v}_{\varrho,i}|} + \big(|\mv{v}_{\varrho,i}|-\sigma_i\big) \, \bs{I}_{L\times L} \bigg)
                          + (1-t) \, \bs{0} \\
                         & \qquad = t \, \bigg( \mv{c}_{i} \otimes \frac{\mv{v}_{\varrho,i}}{|\mv{v}_{\varrho,i}|} + \big(|\mv{v}_{\varrho,i}|-\sigma_i\big) \, \bs{I}_{L\times L} \bigg)
                    \end{align*}
                    for any $i\in\ul{N}$ which is \eqref{eq:desiredStruc_Xi_Yi} with $\tau_i = t$. For $i\in I$ the formulas simplify to
                    \begin{align*}
                        \bs{X}_i 
                         = \frac{t\, \varrho}{\sigma_i} \, \mv{c}_{i} \otimes \mv{v}_{\varrho,i} - \varrho \, \sigma_i \, \bs{I}_{L\times L}, \qquad
                        \bs{Y}_i
                         = \frac{t}{\sigma_i}\, \mv{c}_{i} \otimes \mv{v}_{\varrho,i}.
                    \end{align*}
                    \item[--] If $\vert \mv{v}_{\varrho,i}^{(n)}\vert < \sigma_i$ and $\vert \widetilde{\mv{v}}_{\varrho,i}^{(n)}\vert > \sigma_i$ for all but finitely many $n$ we find that
                    \begin{align*}
                        \bs{X}_i
                         &= t \, \big( - \varrho \, \sigma_i \, \bs{I}_{L\times L} \big)
                          + (1-t) \, \bigg( \varrho \, \mv{c}_{i} \otimes \frac{\mv{v}_{\varrho,i}}{|\mv{v}_{\varrho,i}|} - \varrho \, \sigma_i \, \bs{I}_{L\times L} \bigg) \\
                         & \qquad = (1-t) \, \varrho \, \mv{c}_{i} \otimes \frac{\mv{v}_{\varrho,i}}{|\mv{v}_{\varrho,i}|} - \varrho \, \sigma_i \, \bs{I}_{L\times L} \\
                        \bs{Y}_i
                         &= t \, \bs{0} + (1-t) \, \bigg( \mv{c}_{i} \otimes \frac{\mv{v}_{\varrho,i}}{|\mv{v}_{\varrho,i}|} + \big(|\mv{v}_{\varrho,i}|-\sigma_i\big) \, \bs{I}_{L\times L} \bigg) \\
                         & \qquad = (1-t) \, \bigg( \mv{c}_{i} \otimes \frac{\mv{v}_{\varrho,i}}{|\mv{v}_{\varrho,i}|} + \big(|\mv{v}_{\varrho,i}|-\sigma_i\big) \, \bs{I}_{L\times L} \bigg)
                    \end{align*}
                    for any $i\in\ul{N}$ which is \eqref{eq:desiredStruc_Xi_Yi} with $\tau_i = 1 - t$. For $i\in I$ the formulas simplify to
                    \begin{align*}
                        \bs{X}_i 
                         = \frac{(1-t) \, \varrho}{\sigma_i} \, \mv{c}_{i} \otimes \mv{v}_{\varrho,i} - \varrho \, \sigma_i \, \bs{I}_{L\times L}, \qquad
                        \bs{Y}_i
                         = \frac{1 - t}{\sigma_i}\, \mv{c}_{i} \otimes \mv{v}_{\varrho,i}.
                    \end{align*}
                \end{itemize}
            \end{enumerate}
            This completes the proof.
        \end{proof}

        \begin{lemma}\label{lem: Elasto X Y Preliminaries}
            Let $\bs{H}^*\in\partial\F(\mv{a}^\ast,\mv{b}^\ast,\mv{c}^\ast)$ with the corresponding matrices $\bs{X}_i$ and $\bs{Y}_i$ of the form \eqref{eq:desiredStruc_Xi_Yi}. Then, the pairs $(\bs{X}_i,\bs{Y}_i)$ are eigencomplementary.
        \end{lemma}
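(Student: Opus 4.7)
The key observation is that the matrices in~\eqref{eq:desiredStruc_Xi_Yi} involve the dyadic $\mv{c}_i\otimes\mv{v}_{\varrho,i}/|\mv{v}_{\varrho,i}|$, which is nonsymmetric in general. Since Definition~\ref{subsec:Eigencomplement} only applies to symmetric matrices, eigencomplementarity cannot hold at arbitrary points. The plan therefore is to exploit that $(\mv{a}^\ast,\mv{b}^\ast,\mv{c}^\ast)$ is a zero of $\F$: reading the components $\mv{S}_i(\mv{b}_i^\ast,\mv{c}_i^\ast)=\mv{o}$ off from~\eqref{eq:component_chi} collapses to the vector identity
\begin{equation*}
    \max\{\sigma_i,|\mv{v}_{\varrho,i}^\ast|\}\,\mv{c}_i^\ast \;=\; \sigma_i\,\mv{v}_{\varrho,i}^\ast,
\end{equation*}
which forces $\mv{c}_i^\ast$ to be a nonnegative scalar multiple of $\mv{v}_{\varrho,i}^\ast$.

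Using this collinearity I would put both matrices into an explicitly symmetric form. Setting $\mv{e}_i:=\mv{v}_{\varrho,i}^\ast/|\mv{v}_{\varrho,i}^\ast|$ (provided $\mv{v}_{\varrho,i}^\ast\neq\mv{o}$) and recalling from Lemma~\ref{lem:structure_subgradient} that $\tau_i>0$ entails $|\mv{v}_{\varrho,i}^\ast|\geq\sigma_i$, the previous identity yields $\mv{c}_i^\ast\otimes(\mv{v}_{\varrho,i}^\ast/|\mv{v}_{\varrho,i}^\ast|)=\sigma_i\,\mv{e}_i\otimes\mv{e}_i$. Substituting into~\eqref{eq:desiredStruc_Xi_Yi} (the degenerate case $\tau_i=0$ being immediate, since then $\bs{X}_i=-\varrho\sigma_i\bs{I}_{L\times L}$ and $\bs{Y}_i=\bs{0}$) produces
\begin{align*}
    \bs{X}_i &= \varrho\,\sigma_i\bigl(\tau_i\,\mv{e}_i\otimes\mv{e}_i-\bs{I}_{L\times L}\bigr), \\
    \bs{Y}_i &= \tau_i\,\sigma_i\,\mv{e}_i\otimes\mv{e}_i + \tau_i\bigl(|\mv{v}_{\varrho,i}^\ast|-\sigma_i\bigr)\bs{I}_{L\times L},
\end{align*}
both symmetric and simultaneously diagonalised by any orthonormal basis obtained by extending $\{\mv{e}_i\}$ with an orthonormal basis of $\mv{e}_i^\perp$.

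The third step is to read off the eigenvalues on these two invariant subspaces. On $\spann\{\mv{e}_i\}$ they are $\varrho\sigma_i(\tau_i-1)\leq 0$ for $\bs{X}_i$ and $\tau_i|\mv{v}_{\varrho,i}^\ast|\geq 0$ for $\bs{Y}_i$; on $\mv{e}_i^\perp$ they are $-\varrho\sigma_i<0$ and $\tau_i(|\mv{v}_{\varrho,i}^\ast|-\sigma_i)\geq 0$. This establishes $\bs{X}_i\preccurlyeq 0$, $\bs{Y}_i\succcurlyeq 0$, and the common eigenbasis required by Definition~\ref{subsec:Eigencomplement}. The extra condition~\eqref{eq: Eigencomp both singular} is vacuous unless both matrices are singular, which forces $\tau_i=1$ and $|\mv{v}_{\varrho,i}^\ast|=\sigma_i$; then $\bs{X}_i=-\varrho\sigma_i(\bs{I}_{L\times L}-\mv{e}_i\otimes\mv{e}_i)$ has strictly negative eigenspace $\mv{e}_i^\perp$, while $\bs{Y}_i=\sigma_i\,\mv{e}_i\otimes\mv{e}_i$ has kernel $\mv{e}_i^\perp$, so the two subspaces indeed coincide.

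The main obstacle of the proof is the very first step: recognising that the complementarity imposed by $\F=\mv{o}$ at the solution is precisely what converts the generally nonsymmetric rank-one perturbation in $\bs{X}_i$ and $\bs{Y}_i$ into a scalar multiple of the orthogonal projector $\mv{e}_i\otimes\mv{e}_i$. Once this collinearity is secured, the remainder is a transparent case distinction driven by the value of $\tau_i$.
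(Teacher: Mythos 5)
Your proof is correct and follows the same overall strategy as the paper's: use $\F(\mv{a}^\ast,\mv{b}^\ast,\mv{c}^\ast)=\mv{0}$ (equivalently $\bs{\chi}_i(\bs{p}_i,\bs{\lambda}_i)=\bs{0}$) to deduce the collinearity $\max\{\sigma_i,|\mv{v}_{\varrho,i}^\ast|\}\,\mv{c}_i^\ast=\sigma_i\,\mv{v}_{\varrho,i}^\ast$, which symmetrizes the rank-one terms and forces $|\mv{c}_i^\ast|=\sigma_i$ when $\tau_i>0$; then split into the trivial $\tau_i=0$ case and the $|\mv{v}_{\varrho,i}^\ast|\geq\sigma_i$ case and verify the sign conditions and the kernel-matching condition~\eqref{eq: Eigencomp both singular}. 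Where you depart stylistically is in the diagonalization: the paper constructs explicit (generally non-orthogonal) eigenvectors $\mv{v}_s=\lambda_{L(i-1)+s}^{-1}\mv{c}_i^\ast$ and $\mv{v}_k=\mv{e}_k-\frac{\lambda_{L(i-1)+k}}{\lambda_{L(i-1)+s}}\mv{e}_s$ and checks $\big(\bs{X}_i-\xi_k\bs{I}\big)\mv{v}_k=\mv{o}$ componentwise, whereas you rewrite both $\bs{X}_i$ and $\bs{Y}_i$ as affine combinations of $\bs{I}_{L\times L}$ and the orthogonal projector $\mv{e}_i\otimes\mv{e}_i$ and then read off the spectra on the invariant subspaces $\spann\{\mv{e}_i\}$ and $\mv{e}_i^\perp$. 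This is somewhat cleaner and avoids the component-level bookkeeping, at the cost of needing to flag that $\mv{e}_i$ is well defined (which is guaranteed because $\tau_i>0$ implies $|\mv{v}_{\varrho,i}^\ast|\geq\sigma_i>0$, as you note). Substantively the two arguments establish the same facts and reach the same conclusion.
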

        \begin{proof}
            %
            Recall, that $\mv{a}^{\ast}\in\RR^{dM}$, $\mv{b}^\ast\in\RR^{LN}$ and $\mv{c}^\ast = (\lambda_1,\ldots,\lambda_{LN})^{\top}\in\RR^{LN}$ are the coefficient vectors corresponding to the discrete solution $(\mv{u}_{hp},\bs{p}_{hp},\bs{\lambda}_{hp})\in\VV_{hp}\times\Lambda_{hp}$ of \eqref{eq:discrete_mixed_variationalF} and
            \begin{align*}
                \bs{p}_{hp} = \sum_{i\in\ul{N}} \bs{p}_i \, \phi_i, \qquad
                \bs{\lambda}_{hp} = \sum_{i\in\ul{N}} \bs{\lambda}_i \, \varphi_i.
            \end{align*}
            Moreover, for any $i\in\ul{N}$, it holds that
            \begin{align*}
                \bs{\lambda}_i = \sum_{k\in\ul{L}} \lambda_{L(i-1)+k} \, \bs{\Phi}_k
                 \qquad\Longrightarrow\qquad
                |\bs{\lambda_i}|_F = \bigg( \sum_{k\in\ul{L}} \lambda_{L(i-1)+k}^2 \bigg)^{1/2}=|\mv{c}_{i}^\ast|
            \end{align*}
            due to the fact that $\bs{\Phi}_1,\dots, \bs{\Phi}_L$ form an orthonormal basis of $\SS_{d,0}$ with respect to the Frobenius inner product. From \eqref{eq: xi 0 condition} and \eqref{eq: Chi Basisdarstellung} it follows that
            \begin{align*}
                \bs{\chi}_i(\bs{p}_i,\bs{\lambda}_i) = \sum_{k\in\ul{L}} \chi_{i,k} \, \bs{\Phi}_k = \bs{0}
            \end{align*}
            and, thus,
            \begin{align}\label{eq:wichtige_Gleichh}
                \max \big\lbrace \sigma_i, \vert \mv{v}_{\varrho,i}^{\ast} \vert \big\rbrace \, \mv{c}_i^{\ast} = \sigma_i \, \mv{v}_{\varrho,i}^{\ast}
            \end{align}
            by \eqref{eq:component_chi} and recalling that $\mv{v}_{\varrho,i}^{\ast} = \mv{c}_{i}^\ast + \varrho \, \mv{b}_{i}^\ast$. By Lemma~\ref{lem:structure_subgradient} the matrices $\bs{X}_i, \bs{Y}_i\in\RR^{L\times L}$ take the form
            \begin{align}\label{eq:allgDarst_Xi_Yi}
                \bs{X}_i
                 = \tau_i\, \varrho \, \mv{c}_{i}^\ast \otimes \frac{\mv{v}_{\varrho,i}^\ast}{|\mv{v}_{\varrho,i}^\ast|} - \varrho \, \sigma_i \, \bs{I}_{L\times L}, \qquad
                \bs{Y}_i
                 = \tau_i\left(\mv{c}_{i}^\ast \otimes  \frac{\mv{v}_{\varrho,i}^\ast}{|\mv{v}_{\varrho,i}^\ast|} + \big(|\mv{v}_{\varrho,i}^\ast|-\sigma_i\big) \, \bs{I}_{L\times L}\right)
            \end{align}
            for arbitrary $i\in\ul{N}$ with $\tau_i\in[0,1]$. We distinguish two cases:
            \begin{enumerate}
                \item[\bf 1.] If $|\mv{v}_{\varrho,i}^\ast| \geq \sigma_i$, by \eqref{eq:wichtige_Gleichh} it follows that
                \begin{align}\label{eq:interessante_Impl}
                    |\mv{v}_{\varrho,i}^\ast| \, \mv{c}_i^\ast = \sigma_i \, \mv{v}_{\varrho,i}^\ast
                     \qquad\Longrightarrow\qquad
                     \frac{\mv{v}_{\varrho,i}^\ast}{|\mv{v}_{\varrho,i}^\ast|}
                     = \frac{1}{\sigma_i} \, \mv{c}_i^\ast.
                \end{align}
                Thus, \eqref{eq:allgDarst_Xi_Yi} reduces to
                \begin{align*}
                    \bs{X}_i
                     = \frac{\tau_i\, \varrho}{\sigma_i} \, \mv{c}_{i}^\ast \otimes \mv{c}_{i}^\ast - \varrho \, \sigma_i \, \bs{I}_{L\times L}, \qquad
                    \bs{Y}_i
                     = \tau_i\bigg( \frac{1}{\sigma_i} \, \mv{c}_{i}^\ast \otimes \mv{c}_{i}^\ast + \big(|\mv{v}_{\varrho,i}^\ast|-\sigma_i\big) \, \bs{I}_{L\times L} \bigg),
                \end{align*}
                from which we obviously obtain the symmetry of both matrices. Moreover, by \eqref{eq:interessante_Impl} we have
                \begin{align*}
                    \abs{\mv{c}_i^\ast} = \sigma_i > 0,
                \end{align*}
                which implies that there is at least one $s\in\ul{L}$ with $\lambda_{L(i-1)+s}\neq 0$. We define
                \begin{align*}
                    \xi_k :=
                    \begin{cases}
                        \m \varrho \, \sigma_i,       & \text{for } k\in \ul{L}\setminus\lbrace s\rbrace, \\
                        (\tau_i - 1) \, \varrho \, \sigma_i, & \text{for } k = s,
                    \end{cases} \qquad
                    \eta_k :=
                    \begin{cases}
                       \tau_i\,\big( \vert\mv{v}_{\varrho,i}^{\ast}\vert - \sigma_i\big),            & \text{for } k\in \ul{L}\setminus\lbrace s\rbrace, \\
                        \tau_i \, \vert \mv{v}_{\varrho,i}^{\ast}\vert, & \text{for } k = s
                    \end{cases}
                \end{align*}
                and choose
                \begin{align*}
                    \mv{v}_s
                     := \frac{1}{\lambda_{L(i-1)+s}} \, \mv{c}_i^\ast, \qquad
                    \mv{v}_k
                     := \mv{e}_k - \frac{\lambda_{L(i-1)+k}}{\lambda_{L(i-1)+s}} \, \mv{e}_s \quad
                     \text{for } k\in\ul{L}\setminus\lbrace s\rbrace,
                \end{align*}
                where $\mv{e}_1,\ldots,\mv{e}_L$ denote the Euclidean unit vectors of $\RR^L$. 
                We now consider
                \begin{align*}
                    \big( \bs{X}_i - \xi_k \, \bs{I}_{L\times L}\big) \, \mv{v}_k 
                     &=  \frac{\tau_i \, \varrho}{\sigma_i} \, \big( \mv{v}_k^{\top} \, \mv{c}_i^\ast \big) \, \mv{c}_i^{\ast} - (\varrho \, \sigma_i + \xi_k) \, \mv{v}_k,\\
                     \big(\bs{Y}_i-\eta_k\,\bs{I}_{L\times L}\big)\,\mv{v}_k &=  \frac{\tau_i}{\sigma_i} \,\big( \mv{v}_k^{\top} \, \mv{c}_i^\ast\big) \, \mv{c}_i^\ast  + \big(\tau_i(|\mv{v}_{\varrho,i}^\ast|-\sigma_i)-\eta_k\big) \,\mv{v}_k .
                \end{align*}
                For $k\in \ul{L}\setminus\lbrace s\rbrace$ it holds that
                    \begin{align*}
                        \mv{v}_k^{\top} \, \mv{c}_i^\ast = 0, \qquad
                         (\varrho \, \sigma_i + \xi_k) \, \mv{v}_k = \mv{0}, \qquad \big(\tau_i(|\mv{v}_{\varrho,i}^\ast|-\sigma_i)-\eta_k\big) \,\mv{v}_k=\mv{o}
                    \end{align*}
                and for $k = s$ it holds that
                    \begin{align*}
                        \mv{v}_s^{\top} \, \mv{c}_i^\ast
                         &= \frac{\vert \mv{c}_i^{\ast}\vert^2}{\lambda_{L(i-1)+s}}
                         = \frac{\sigma_i^2}{\lambda_{L(i-1)+s}},\\
                         (\varrho \, \sigma_i + \xi_s) \, \mv{v}_s
                         &= \frac{\tau_i \, \varrho \, \sigma_i}{\lambda_{L(i-1)+s}} \, \mv{c}_i^\ast,\\
                         \big(\tau_i(|\mv{v}_{\varrho,i}^\ast|-\sigma_i)-\eta_s\big)\,\mv{v}_s &=-\frac{\tau_i\,\sigma_i}{\lambda_{L(i-1)+s}}\,\mv{c}_i^\ast.
                    \end{align*}
                Thus, in total we obtain
                \begin{align*}
                     \big( \bs{X}_i - \xi_k \, \bs{I}_{L\times L}\big) \, \mv{v}_k = \mv{0},\qquad \big(\bs{Y}_i-\eta_k\,\bs{I}_{L\times L}\big)\,\mv{v}_k = \mv{o}
                \end{align*}
                for all $k\in\ul{L}$. Thus, $\bs{X}_i$ and $\bs{Y}_i$ have the the eigenbasis $\mv{v}_1,\ldots,\mv{v}_L$ in common and $\xi_k\leq 0 \leq  \eta_k$. Finally, if both matrices are singular, which means that $\tau_i = 1$ and $\vert\mv{v}_{\varrho,i}^{\ast}\vert = \sigma_i$, then it holds that
                \begin{align*}
                    \Eig_{\bs{X}_i}(-\varrho\sigma_i) = \Eig_{\bs{Y}_i}(0).
                \end{align*}
                This proves the eigencomplementarity of $(\bs{X}_i,\bs{Y}_i)$ for $|\mv{v}_{\varrho,i}^\ast| \geq \sigma_i$.

                \item[\bf 2.] If $|\mv{v}_{\varrho,i}^\ast| < \sigma_i$, then Lemma~\ref{lem:structure_subgradient} yields that $\tau_i =0$, which means that
                \begin{align*}
                    \bs{X}_i 
                      = - \varrho \, \sigma_i \, \bs{I}_{L\times L}, \qquad
                    \bs{Y}_i
                     = \bs{0}.
                \end{align*}
                Both matrices are obviously symmetric, all eigenvalues of $\bs{X}_i$ have the same value $-\varrho\sigma_i< 0$ and all eigenvalues of $\bs{Y}_i$ are zero. They also have the eigenbasis consisting of Euclidean unit vectors of $\RR^L$ in common. Therefore, the pair $(\bs{X}_i,\bs{Y}_i)$ are eigencomplementary by  Definition~\ref{subsec:Eigencomplement}.
            \end{enumerate}
            This completes the argument.
        \end{proof}
        Due to the properties of $\F$ and $H^*\in\F(\mv{a}^\ast,\mv{b}^\ast,\mv{c}^\ast) $ applying Lemma~\ref{lem:structure_subgradient} and Lemma~\ref{lem: Elasto X Y Preliminaries} together with Theorem~\ref{thm:conditions_convergence} immediately leads to the following result:
        \begin{theorem}\label{cor:elastoNewton_Convergence}
            Algorithm \ref{eq:SSN_iterates} is well-defined in a neighborhood of the solution $(\mv{a}^\ast,\mv{b}^\ast,\mv{c}^\ast)^\top\in\RR^{dM}\times\RR^{LN}\times\RR^{LN}$ of \eqref{eq:semismooth_newtonF_Elastoplast_02} and converges locally superlinear.
        \end{theorem}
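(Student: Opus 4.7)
The plan is to observe that this theorem is essentially a direct application of the abstract convergence result Theorem~\ref{thm:conditions_convergence} to the concrete elastoplastic setting, with all the heavy technical work having been carried out in the two preceding lemmas of this subsection. So the proof should simply verify the hypotheses of Theorem~\ref{thm:conditions_convergence} in three short steps.

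First, I would verify that the nonlinear system \eqref{eq:semismooth_newtonF_Elastoplast_02} arising from the $hp$-FE discretization of the mixed elastoplastic formulation fits exactly into the abstract class introduced in Section~\ref{sec: abstract setting}. Indeed, by construction in Section~\ref{sec:system_decopled_NE}, the function $\F$ defined in \eqref{eq:semismooth_newtonF_Elastoplast} has the form \eqref{eq:semismooth_newtonF} with the affine linear part $\mv{L}$ built from the symmetric positive definite matrices $\bs{A}$, $\bs{C}$, $\bs{E}$, the diagonal positive definite matrix $\bs{D}$, and the semismooth block components $\mv{S}_i(\mv{b}_i,\mv{c}_i)$ coming from the decoupled plasticity functions. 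A unique solution $(\mv{a}^\ast,\mv{b}^\ast,\mv{c}^\ast)$ to \eqref{eq:semismooth_newtonF_Elastoplast_02} exists by the equivalence with the uniquely solvable discrete mixed formulation \eqref{eq:discrete_mixed_variationalF}.

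Second, I would invoke Lemma~\ref{lem:structure_subgradient_abstract} to conclude that $\F$ is Lipschitz continuous and semismooth, and that every element $\bs{H}^\ast \in \partial \F(\mv{a}^\ast,\mv{b}^\ast,\mv{c}^\ast)$ has the block structure \eqref{eqn: H_structure} with block-diagonal matrices $\bs{X} = \diag(\bs{X}_1,\ldots,\bs{X}_N)$ and $\bs{Y} = \diag(\bs{Y}_1,\ldots,\bs{Y}_N)$. The explicit form of each $\bs{X}_i,\bs{Y}_i$ is then provided by Lemma~\ref{lem:structure_subgradient}, and Lemma~\ref{lem: Elasto X Y Preliminaries} establishes that the pair $(\bs{X}_i,\bs{Y}_i)$ is eigencomplementary for every $i\in\ul{N}$.

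Finally, applying Theorem~\ref{thm:conditions_convergence} with the verified eigencomplementarity and the semismoothness/Lipschitz properties of $\F$ yields well-definedness of Algorithm~\ref{eq:SSN_iterates} in a neighborhood of $(\mv{a}^\ast,\mv{b}^\ast,\mv{c}^\ast)$ together with local superlinear convergence, which is the claim. There is essentially no obstacle at this stage; the substantive work lies in Lemma~\ref{lem: Elasto X Y Preliminaries}, where the structural relation \eqref{eq:wichtige_Gleichh} coming from $\bs{\chi}_i(\bs{p}_i,\bs{\lambda}_i)=\bs{0}$ is exploited to produce the common eigenbasis of $\bs{X}_i$ and $\bs{Y}_i$ and to check the complementarity condition \eqref{eq: Eigencomp both singular} in the singular case $\tau_i=1$, $|\mv{v}_{\varrho,i}^\ast|=\sigma_i$.
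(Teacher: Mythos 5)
Your proposal matches the paper's argument exactly: the theorem is stated in the paper as an immediate consequence of Lemma~\ref{lem:structure_subgradient}, Lemma~\ref{lem: Elasto X Y Preliminaries}, and Theorem~\ref{thm:conditions_convergence}, after noting that the system \eqref{eq:semismooth_newtonF_Elastoplast_02} fits the abstract setting and has a unique solution by equivalence with \eqref{eq:discrete_mixed_variationalF}. Your three-step verification is precisely what the paper intends, and your identification of Lemma~\ref{lem: Elasto X Y Preliminaries} as the locus of the substantive work is correct.
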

        The convergence rates of the numerical experiments in the following section show that the order of convergence is greater than one (namely 4/3). This shows that $\F$ might be $\alpha$-order semismooth for some $\alpha\in(0,1)$, which, in view of Theorem~\ref{thm:conditions_convergence}, suggests a convergence of order $1+\alpha$.


\smallskip

\section{Numerical Results}\label{sec:numeric}

Let $\Omega := (\m 1,1)^2$ with Dirichlet boundary $\Gamma_D := [\m 1,1]\times\lbrace \m 1 \rbrace$ and Neumann boundary $\Gamma_N := \partial \Omega \setminus \Gamma_D$. The volume and Neumann forces are defined as $\mv{f} := \mv{0}$ and $\mv{g} := (0,-400 \min(0,x_1^2-1/4)^2)^{\top}$ on $[\m 1,1]\times \lbrace 1\rbrace$ and zero elsewhere on the Neumann boundary, respectively. Furthermore, the material is described by $\myspace{C}\bs{\tau} := \lambda \operatorname{tr}(\bs{\tau})\bs{I}+2\mu\bs{\tau}$ with Lam\'e constants $\lambda:=\mu:=1000$, $\myspace{H}\bs{\tau}:=500\bs{\tau}$ and $\sigma_y:=5$.
In the following, we consider three discretizations:  
\begin{enumerate}[(i)]
    \item A uniform $h$-version with $h=2^{\m 8}$, $p=1$, which leads to a total of $dM=525.312$ degrees of freedom for the displacement variable $\mv{u}_{hp}$ and  $LN=524.288$ degrees of freedom for the plasticity variable $\bs{p}_{hp}$ as well as the discrete Lagrange multiplier $\bs{\lambda}_{hp}$.
    \item A uniform $p$-version with $h=2^{\m 1}$, $p=25$, which leads to $dM=20.200$ and $LN=20.000$ degrees of freedom, respectively.
    \item A discretization resulting from an adaptive $hp$-scheme with 
    $$
        h_{\min}=2^{\m 25}, \ldots , h_{\max}=2^{\m 2},\qquad p_{\min}=1, \ldots ,p_{\max}=7,
    $$
    which leads to $dM=98.572$ and $LN=104.292$ degrees of freedom, respectively; see \cite{ref:Bammer2023Apriori} for more details.
\end{enumerate}

Thereby, we solve \eqref{eq:semismooth_newtonF_Elastoplast_02} with Algorithm~\ref{eq:SSN_iterates} and zero initial solution. Moreover, we choose the projection parameter $\varrho$ in the definition of the functions $\boldsymbol{\chi}_i$, cf.~\eqref{eq:defi_chi_i}, to be $\varrho:=25$. Numerical experiments indicate that the number of semismooth Newton iterations is robust to changes in $\varrho$ and the finite element spaces but of course depends on the problem at hand. Only for rather small $\varrho$ the number of iterations increases notably. For large $\varrho$ the local convergence radius decreases, which necessitates the use of an adequate back tracking strategy.
We take the 10th iterate with a merit value $\frac12|\F|^2$  of $2.38 \cdot 10^{\m 24}$ ($h$-version) or of $9.37\cdot 10^{\m 23}$ ($p$-version) as a substitute for the exact solution $(\mv{a}^{\ast},\mv{b}^{\ast},\mv{c}^{\ast})$ for the uniform cases. For the $hp$-adaptive scheme, it is the 11th iterate with a merit value of $2.02 \cdot 10^{\m 23}$. In Figure~\ref{fig:SSN_speed} we plot the quotients
\begin{align*}
     q_\text{SSN}(\rho):= \frac{\left| \left( \mv{a}^{(k+1)}-\mv{a}^{\ast},\mv{b}^{(k+1)}-\mv{b}^{\ast},\mv{c}^{(k+1)}-\mv{c}^{\ast} \right) \right| }{ \left| \left( \mv{a}^{(k)}-\mv{a}^{\ast},\mv{b}^{(k)}-\mv{b}^{\ast},\mv{c}^{(k)}-\mv{c}^{\ast} \right) \right|^\rho}, \qquad k=0,1,2,\ldots
\end{align*}
for $\rho \in \{1,4/3,2\}$ and the three discretizations to numerically validate the convergence order of the semismooth Newton method. As $q_\text{SSN}(1) \rightarrow 0$ but $q_\text{SSN}(2) \rightarrow \infty$ Algorithm \ref{eq:SSN_iterates} converges locally superlinear in accordance to Theorem~\ref{cor:elastoNewton_Convergence} but not quadratically. Moreover, we find that the semismooth Newton method exhibits a $4/3$-order convergence as $q_\text{SSN}(4/3)$ is bounded; see Figure~\ref{fig:qssn_4_3}.

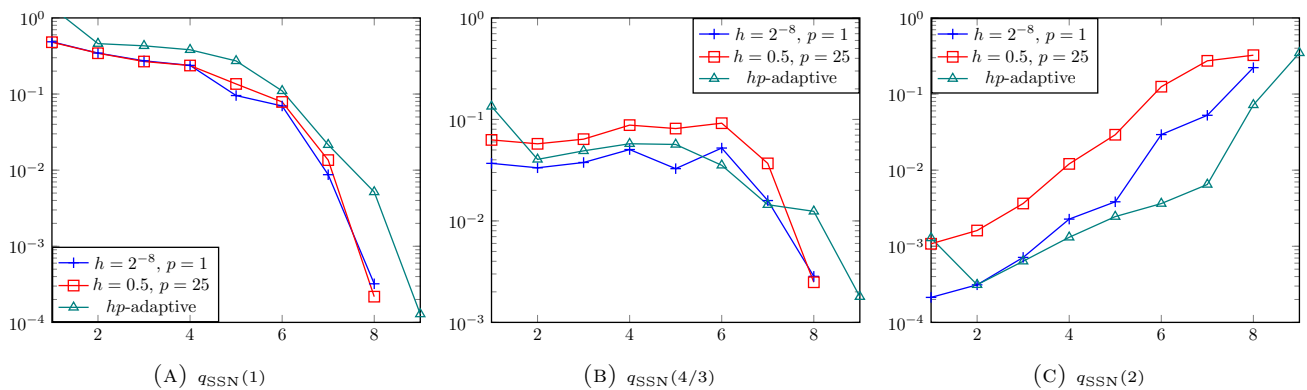
\begin{figure}[htb]
  \centering\hspace{-1em}
\subfloat[\tiny $q_\text{SSN}(1)$]{
	\begin{tikzpicture}[scale=0.66]
		\begin{semilogyaxis}[
			width=0.525\textwidth,
			mark size=3pt,
			line width=0.75pt,
			xmin=1,xmax=9,
			ymin=1e-4,ymax=1,
			legend style={at={(0,0)},anchor=south west},
            legend style={fill=none}
			]
			\addplot+[mark=+, color=blue] table[x index=0,y index=1] {data_ssn_h1.txt};
 			\addplot+[mark=square, color=red] table[x index=0,y index=1] {data_ssn_p.txt};
 			\addplot+[mark=triangle, color=teal] table[x index=0,y index=1] {data_ssn_hp.txt};

 			\legend{{$h=2^{-8}$, $p=1$},{$h=0.5$, $p=25$},{$hp$-adaptive}}
		\end{semilogyaxis}
	\end{tikzpicture}}
	\hspace{0.0cm}  \subfloat[\tiny $q_\text{SSN}(4/3)$]{\label{fig:qssn_4_3}
	\begin{tikzpicture}[scale=0.66]
		\begin{semilogyaxis}[
			width=0.525\textwidth,
			mark size=3pt,
			line width=0.75pt,
			xmin=1,xmax=9,
			ymin=1e-3,ymax=1,
			legend style={at={(1,1)},anchor=north east},
            legend style={fill=none}
			]
			\addplot+[mark=+, color=blue] table[x index=0,y index=2] {data_ssn_h1.txt};
 			\addplot+[mark=square, color=red] table[x index=0,y index=2] {data_ssn_p.txt};
            \addplot+[mark=triangle, color=teal] table[x index=0,y index=2] {data_ssn_hp.txt};

 			\legend{{$h=2^{-8}$, $p=1$},{$h=0.5$, $p=25$},{$hp$-adaptive}}
		\end{semilogyaxis}
	\end{tikzpicture}}
	\hspace{0.0cm}  \subfloat[\tiny $q_\text{SSN}(2)$]{
	\begin{tikzpicture}[scale=0.66]
		\begin{semilogyaxis}[
			width=0.525\textwidth,
			mark size=3pt,
			line width=0.75pt,
			xmin=1,xmax=9,
			ymin=1e-4,ymax=1,
			legend style={at={(0,1)},anchor=north west},
            legend style={fill=none}
			]
			\addplot+[mark=+, color=blue] table[x index=0,y index=3] {data_ssn_h1.txt};
 			\addplot+[mark=square, color=red] table[x index=0,y index=3] {data_ssn_p.txt};
            \addplot+[mark=triangle, color=teal] table[x index=0,y index=3] {data_ssn_hp.txt};

 			\legend{{$h=2^{-8}$, $p=1$},{$h=0.5$, $p=25$},{$hp$-adaptive}}
		\end{semilogyaxis}
	\end{tikzpicture}}
	\vspace{-0.25cm}
	\caption{\small\emph{Iteration number $k$ vs.~$q_\text{SSN}(\rho)$.}} \label{fig:SSN_speed}
\end{figure}


\smallskip

\bibliographystyle{amsalpha}

\smallskip

\end{document}